\newtheorem{theorem}{Theorem}[section]
\newtheorem*{maintheorem*}{Main Theorem}
\newtheorem*{maintheorems*}{Main Theorems}
\newtheorem{corollary}[theorem]{Corollary}
\newtheorem*{corollary*}{Corollary}
\newtheorem*{corollaries*}{Corollaries}
\newtheorem{question}[theorem]{Question}
\newtheorem*{question*}{Question}
\newtheorem*{questions*}{Questions}
\newtheorem*{mainquestion*}{Main Question} 
\newtheorem*{openquestion*}{Open Question} 
\newtheorem{observation}[theorem]{Observation}
\newtheorem{definition}[theorem]{Definition}
\theoremstyle{remark}
\newtheorem{remark}[theorem]{Remark}
\newcommand{\QED}{\end{proof}}
\def\proclaim[#1]{{\bf #1}}
\def\BF#1.{{\bf #1.}}
\def\says#1:#2\par{\item[#1] #2\par}
\newcommand{\Godel}{G\"odel}
\newcommand{\Levy}{L\'{e}vy}
\newcommand{\Velickovic}{Veli\v ckovi\'c}
\renewcommand{\P}{{\mathbb P}}
\newcommand{\dotminus}{\mathbin{\text{\@dotminus}}}
\newcommand{\@dotminus}{%
  \ooalign{\hidewidth\raise1ex\hbox{.}\hidewidth\cr$\m@th-$\cr}%
}
\newcommand{\of}{\subseteq}
\newcommand{\ofneq}{\subsetneq}
\newcommand{\set}[1]{\{\,{#1}\,\}}
\newcommand{\elesub}{\prec}
\newcommand{\dom}{\mathop{\rm dom}}
\newcommand{\Add}{\mathop{\rm Add}}
\newcommand{\restrict}{\upharpoonright} 
\newcommand{\satisfies}{\models}
\DeclareMathOperator{\possible}{\text{\tikz[scale=.6ex/1cm,baseline=-.6ex,rotate=45,line width=.1ex]{\draw (-1,-1) rectangle (1,1);}}}
\DeclareMathOperator{\necessary}{\text{\tikz[scale=.6ex/1cm,baseline=-.6ex,line width=.1ex]{\draw (-1,-1) rectangle (1,1);}}}
\DeclareMathOperator{\downpossible}{\text{\tikz[scale=.6ex/1cm,baseline=-.6ex,rotate=45,line width=.1ex]{
                            \draw (-1,-1) rectangle (1,1); \draw[very thin] (-.6,1) -- (-.6,-.6) -- (1,-.6);}}}
\DeclareMathOperator{\downnecessary}{\text{\tikz[scale=.6ex/1cm,baseline=-.6ex,line width=.1ex]{
                            \draw (-1,-1) rectangle (1,1); \draw[very thin] (-.6,1) -- (-.6,-.6) -- (1,-.6);}}}
\DeclareMathOperator{\uppossible}{\text{\tikz[scale=.6ex/1cm,baseline=-.6ex,rotate=45,line width=.1ex]{
                            \draw (-1,-1) rectangle (1,1); \draw[very thin] (-1,.6) -- (.6,.6) -- (.6,-1);}}}
\newcommand{\theoryf}[1]{{\rm #1}}
\newcommand{\intersect}{\cap}
\newcommand{\Intersect}{\bigcap}
\newcommand{\smalllt}{\mathrel{\mathchoice{\raise2pt\hbox{$\scriptstyle<$}}{\raise1pt\hbox{$\scriptstyle<$}}{\raise0pt\hbox{$\scriptscriptstyle<$}}{\scriptscriptstyle<}}}
\newcommand{\smallleq}{\mathrel{\mathchoice{\raise2pt\hbox{$\scriptstyle\leq$}}{\raise1pt\hbox{$\scriptstyle\leq$}}{\raise1pt\hbox{$\scriptscriptstyle\leq$}}{\scriptscriptstyle\leq}}}
\newcommand{\boolval}[1]{\mathopen{\lbrack\!\lbrack}\,#1\,\mathclose{\rbrack\!\rbrack}}
\def\[#1]{\boolval{#1}}
\newbox\gnBoxA
\newdimen\gnCornerHgt
\newdimen\gnArgHgt
\def\gcode #1{%
\setbox\gnBoxA=\hbox{$#1$}%
\gnArgHgt=\ht\gnBoxA%
\ifnum     \gnArgHgt<\gnCornerHgt \gnArgHgt=0pt%
\else \advance \gnArgHgt by -\gnCornerHgt%
\fi \raise\gnArgHgt\hbox{\tiny$\ulcorner$} \box\gnBoxA %
\raise\gnArgHgt\hbox{\tiny$\urcorner$}}
\newcommand{\UnderTilde}[1]{{\setbox1=\hbox{$#1$}\baselineskip=0pt\vtop{\hbox{$#1$}\hbox to\wd1{\hfil$\sim$\hfil}}}{}}
\newcommand{\Undertilde}[1]{{\setbox1=\hbox{$#1$}\baselineskip=0pt\vtop{\hbox{$#1$}\hbox to\wd1{\hfil$\scriptstyle\sim$\hfil}}}{}}
\newcommand{\undertilde}[1]{{\setbox1=\hbox{$#1$}\baselineskip=0pt\vtop{\hbox{$#1$}\hbox to\wd1{\hfil$\scriptscriptstyle\sim$\hfil}}}{}}
\newcommand{\UnderdTilde}[1]{{\setbox1=\hbox{$#1$}\baselineskip=0pt\vtop{\hbox{$#1$}\hbox to\wd1{\hfil$\approx$\hfil}}}{}}
\newcommand{\Underdtilde}[1]{{\setbox1=\hbox{$#1$}\baselineskip=0pt\vtop{\hbox{$#1$}\hbox to\wd1{\hfil\scriptsize$\approx$\hfil}}}{}}
\renewcommand{\implies}{\mathrel{\rightarrow}}
\def\<#1>{\left\langle#1\right\rangle}
\newcommand{\Tr}{\mathop{\rm Tr}}
\newcommand{\Ord}{\mathord{{\rm Ord}}}
\newcommand{\ZFC}{{\rm ZFC}}
\newcommand{\ZF}{{\rm ZF}}
\newcommand{\KM}{{\rm KM}}
\newcommand{\GBC}{{\rm GBC}}
\newcommand{\GCH}{{\rm GCH}}
\newcommand{\AC}{{\rm AC}}
\newcommand{\DDG}{{\rm DDG}}
\newcommand{\MM}{{\rm MM}}
\newcommand{\PFA}{{\rm PFA}}
\newcommand{\BPFA}{{\rm BPFA}}
\newcommand{\MP}{{\rm MP}}
\newcommand{\HOD}{{\rm HOD}}
\newcommand{\IMH}{{\rm IMH}}
\newcommand{\cell}[1]{\boxit{\hbox to 17pt{\strut\hfil$#1$\hfil}}}
\newcommand{\head}[2]{\lower2pt\vbox{\hbox{\strut\footnotesize\it\hskip3pt#2}\boxit{\cell#1}}}
\newcommand{\boxit}[1]{\setbox4=\hbox{\kern2pt#1\kern2pt}\hbox{\vrule\vbox{\hrule\kern2pt\box4\kern2pt\hrule}\vrule}}
\newcommand{\Col}[3]{\hbox{\vbox{\baselineskip=0pt\parskip=0pt\cell#1\cell#2\cell#3}}}
\newcommand{\tapenames}{\raise 5pt\vbox to .7in{\hbox to .8in{\it\hfill input: \strut}\vfill\hbox to
.8in{\it\hfill scratch: \strut}\vfill\hbox to .8in{\it\hfill output: \strut}}}
\newcommand{\Head}[4]{\lower2pt\vbox{\hbox to25pt{\strut\footnotesize\it\hfill#4\hfill}\boxit{\Col#1#2#3}}}
\newcommand{\Dots}{\raise 5pt\vbox to .7in{\hbox{\ $\cdots$\strut}\vfill\hbox{\ $\cdots$\strut}\vfill\hbox{\
$\cdots$\strut}}}
\newcommand{\df}{\it} 
\title[Inner-model reflection]{Inner-model reflection principles}
\author[Barton]{Neil Barton}
 \address[Neil Barton]
         {Kurt G\"{o}del Research Center for Mathematical Logic, W\"{a}hringer Stra\-{\ss}e, 25, 1090, Vienna, Austria}
 \email{neil.barton@univie.ac.at}
 \urladdr{https://neilbarton.net}
\author[Caicedo]{Andr\'es Eduardo Caicedo}
 \address[Andr\'es Eduardo Caicedo]
        {Mathematical Reviews, 416 Fourth Street, Ann Arbor, MI 48103-4820, USA}
\email{aec@ams.org}
\urladdr{http://www-personal.umich.edu/~caicedo}
\author[Fuchs]{Gunter Fuchs}
 \address[Gunter Fuchs]
         {Mathematics, The Graduate Center of The City University of New York,
         365 Fifth Avenue, New York, NY 10016 \&
         Mathematics, College of Staten Island of CUNY, Staten Island, NY 10314, USA}
 \email{gunter.fuchs@csi.cuny.edu}
 \urladdr{http://www.math.csi.cuny.edu/~fuchs}
\author[Hamkins]{Joel David Hamkins}
 \address[Joel David Hamkins]
         {Mathematics, Philosophy, Computer Science, The Graduate Center of The City University of New York,
         365 Fifth Avenue, New York, NY 10016 \&
         Mathematics, College of Staten Island of CUNY, Staten Island, NY 10314, USA}
 \email{jhamkins@gc.cuny.edu}
 \urladdr{http://jdh.hamkins.org}
\author[Reitz]{Jonas Reitz}
 \address[Jonas Reitz]{New York City College of Technology of The City University of New York,
                    Mathematics, 300 Jay Street, Brooklyn, NY 11201, USA}
 \email{jreitz@citytech.cuny.edu}
\author[Schindler]{Ralf Schindler}
\address[Ralf Schindler]{Institut f\"ur mathematische Logik und Grundlagenforschung,
Fachbereich Mathematik und Informatik, Universit\"at M\"unster, Einsteinstra\ss e 62, 48149 M\"unster, Germany}
\email{rds@wwu.de}
\keywords{Inner-model reflection principle, ground-model reflection principle}
\subjclass[2010]{Primary 03E45; Secondary 03E35, 03E55, 03E65}
\thanks{This article grew out of an exchange held by some of the authors on the Mathematics.StackExchange site in response to an inquiry posted by the first-named author 
concerning the nature of width-reflection in comparison to height-reflection \cite{Barton.MSE1912624:What-is-the-consistency-strength-of-width-reflection?}. Commentary concerning 
this paper can be made at \href{http://jdh.hamkins.org/inner-model-reflection-principles}{http://jdh.hamkins.org/inner-model-reflection-principles}.}
\begin{document}

\begin{abstract}
We introduce and consider the inner-model reflection principle, which asserts that whenever a statement $\varphi(a)$ in the first-order language of set theory is true in the
set-theoretic universe $V$, then it is also true in a proper inner model $W\ofneq V$. A stronger principle, the ground-model reflection principle, asserts that any such $\varphi(a)$
true in $V$ is also true in some non-trivial ground model of the universe with respect to set forcing. These principles each express a form of width reflection in contrast to the usual
height reflection of the \Levy-Montague reflection theorem. They are each equiconsistent with \ZFC\ and indeed $\Pi_2$-conservative over \ZFC, being forceable by class forcing
while preserving any desired rank-initial segment of the universe. Furthermore, the inner-model reflection principle is a consequence of the existence of sufficient large cardinals,
and lightface formulations of the reflection principles follow from the maximality principle \MP\ and from the inner-model hypothesis \IMH. We also consider some questions 
concerning the expressibility of the principles.
\end{abstract}

\maketitle

\tableofcontents

\section{Introduction} \label{sec:introduction}

Every set theorist is familiar with the classical \Levy-Montague reflection principle, which explains how truth in the full set-theoretic universe $V$ reflects down to truth in various
rank-initial segments $V_\theta$ of the cumulative hierarchy. Thus, the \Levy-Montague reflection principle is a form of height-reflection, in that truth in $V$ is reflected vertically
downwards to truth in some $V_\theta$.

In this brief article, in contrast, we should like to introduce and consider a form of width-reflection, namely, reflection to non-trivial inner models. Specifically, we shall consider the
following reflection principles.

\begin{definition} \label{def:IMR} \
\begin{enumerate}
  \item
  The \emph{inner-model reflection principle} asserts that if a statement $\varphi(a)$ in the first-order language of set theory is true in the set-theoretic universe $V$, then there is a
  proper inner model $W$, a transitive class model of \ZF\ containing all ordinals and with $a\in W\ofneq V$, in which $\varphi(a)$ is true.
  \item
  The \emph{ground-model reflection principle} asserts that if $\varphi(a)$ is true in $V$, then there is a non-trivial ground model $W\ofneq V$ with $a\in W$ and 
  $W\satisfies\varphi(a)$.
  \item
  Variations of the principles arise by insisting on inner models of a particular type, such as ground models for a particular type of forcing, or by restricting the class of parameters or
  formulas that enter into the schema.
  \item
  The \emph{lightface} forms of the principles, in particular, make their assertion only for sentences, so that if $\sigma$ is a sentence true in $V$, then $\sigma$ is true in some proper
  inner model or ground $W$, respectively.
\end{enumerate}
\end{definition}

In item (1), we could equivalently insist that the inner model $W$ satisfies \ZFC, simply by reflecting the conjunction $\varphi(a)\land\AC$ instead of merely $\varphi(a)$. For the
rest of this article, therefore, our inner models will satisfy \ZFC. In item (2), a \emph{ground model} or simply a \emph{ground} of the universe $V$ is a transitive inner model
$W\satisfies\ZFC$ over which the universe $V$ is obtained by set forcing, so that $V=W[G]$ for some forcing notion $\P\in W$ and some $W$-generic filter $G\of\P$. The
universe $V$, of course, is a ground of itself by trivial forcing, but the ground-model reflection principle seeks grounds $W\ofneq V$ that are properly contained in $V$.

Aside from the usual height-reflection principles, the closest relatives of the principles we are considering are the inner model hypotheses proposed by Friedman
\cite{Friedman2006:InternalConsistencyAndIMH}. These meta-principles assert that certain sentences obtainable in outer models are already satisfied in inner models. Although 
inner model hypotheses were one of the original motivators for asking in \cite{Barton.MSE1912624:What-is-the-consistency-strength-of-width-reflection?} the question that prompted 
this paper, they end up behaving rather differently, with two immediate salient differences between them and inner-model reflection principles being that 
\begin{enumerate}
\item
the inner-model reflection principles make no reference to outer models, and 
\item 
the inner models asserted to exist in inner model hypotheses are not necessarily proper.
\end{enumerate} 
As we will see, these principles also differ in their expressibility. However, the lightface version of the inner-model reflection principle is obtainable from the standard inner model 
hypothesis, see \S\,\ref{sec:the}.

The full inner-model reflection principle is expressible in the second-order language of set theory, such as in \Godel-Bernays \GBC\ set theory, as a schema:
 $$ \forall a\,\bigl[\varphi(a)\implies\exists W\ofneq V\,\varphi^W(a)\bigr], $$
with a separate statement for each formula $\varphi$ and where the quantifier $\exists W$ ranges over the inner models of \ZFC. 

(We add a clarification at the request of a referee. In \GBC, choice is usually formulated as a global principle asserting that there is a class function selecting an element from every 
non-empty set. In contrast, it is stated in \ZFC\ as the existence of such set functions for each set-sized collection of non-empty sets. Nevertheless, this difference plays no role here, 
and even the weakening of \GBC\ that uses the \ZFC\ version of choice suffices to express the inner-model reflection principle as indicated above.)

The ground-model reflection principle, in contrast, is expressible as a schema in the first-order language of set theory. To see this, consider first the ground-model enumeration
theorem of Fuchs-Hamkins-Reitz \cite[theorem 12]{FuchsHamkinsReitz2015:Set-theoreticGeology}, which asserts that there is a definable class $W\of V\times V$ for which
\begin{enumerate}
\item[(i)] 
every section $W_r=\set{x\mid (r,x)\in W}$ is a ground of $V$ by set forcing; and 
\item[(ii)] 
every ground arises as such a section $W_r$.
\end{enumerate} 
Thus, the collection of ground models $\set{W_r\mid r\in V}$ is uniformly definable and we may quantify over the grounds in a first-order manner by quantifying over the indices $r$ 
used to define them. In light of the enumeration theorem, the ground-model reflection principle is expressed in the first-order language of set theory as the following schema:
 $$ \forall a\,\bigl[\varphi(a)\implies\exists r\, W_r\ofneq V\land \varphi^{W_r}(a)\bigr]. $$
We may therefore undertake an analysis of the ground-model reflection principle in a purely first-order formulation of set theory, such as in \ZFC.

Clearly, the ground-model reflection principle strengthens the inner-model reflection principle, since ground models are inner models. Both principles are obviously false under the
axiom of constructibility $V=L$, since $L$ has no non-trivial inner models. Similarly, the ground-model reflection principle is refuted by the ground axiom, which asserts that there
are no non-trivial grounds, see Hamkins \cite{Hamkins2005:TheGroundAxiom} and Reitz \cite{Reitz2006:Dissertation, Reitz2007:TheGroundAxiom}. In particular, the ground axiom 
holds in many of the canonical inner models of large cardinal assumptions, such as the Dodd-Jensen core model $K^{DJ}$, the model $V=L[\mu]$, and also the Jensen-Steel core 
model $K$, provided that there is no inner model with a Woodin cardinal. The reason is that these inner models are definable in a way that is generically absolute, see Jensen-Steel 
\cite{JensenSteel2013:K-without-the-measurable} and Mitchell \cite{Mitchell2012:Inner-models-for-large-cardinals} (one uses the hypothesis of no inner models with a Woodin 
cardinal in the case of $K$), and so they have no non-trivial ground models.

In the rest of this paper we verify and discuss some other properties of these principles, and how they can be obtained. \S\,\ref{sec:forcing} provides a discussion of how the 
principles can be forced. In particular we show in theorems \ref{Theorem.Cohen-real-forces-lightface-GMR} and \ref{Theorem.Forcing-GMR} that both the lightface and boldface 
versions of the ground-model reflection principle are obtainable from models of ZFC using forcing constructions. In \S\,\ref{sec:large} we explain how the principles interact with large 
cardinal axioms, proving in theorems \ref{Theorem.measurables-imply-inner-model-reflection} and \ref{Theorem.Ord-is-Ramsey-implies-IMR} that the inner-model reflection principle 
is a consequence of sufficient large cardinals. We also discuss how these principles behave in many canonical inner models. In particular, as we shall explain in corollary 
\ref{Corollary.K-has-IMR-not-GMR} and following remarks, under the right large cardinal assumption, the core model satisfies the inner-model reflection principle, but not the 
ground-model reflection principle. In contrast to this, in theorem \ref{thm:PCW->GMR} we show that fine-structural inner models of sufficiently strong large cardinal assumptions 
satisfy the ground-model reflection principle. This is complemented by theorem \ref{thm:NPCW->NGMR}, which shows that the assumption of theorem \ref{thm:PCW->GMR} is 
optimal. In \S\,\ref{sec:the} we show how the maximality principle of Stavi-V\"a\"an\"anen \cite{StaviVaananen2001:ReflectionPrinciples} and Hamkins 
\cite{Hamkins2003:MaximalityPrinciple} implies the lightface ground-model reflection principle, and how the inner model hypothesis of 
\cite{Friedman2006:InternalConsistencyAndIMH} implies the lightface inner-model reflection principle. Next, \S\,\ref{sec:forcingaxioms} considers the relationship with forcing 
axioms. We point out that while the bounded proper forcing axiom is consistent with the failure of inner-model reflection, the ground-model reflection principle is consistent with 
several strong forcing axioms. Finally, in \S\,\ref{sec:expressibility} we discuss limitations concerning the expressibility of the inner-model reflection principle, and conclude with an 
open question.

\section{Forcing inner model reflection} \label{sec:forcing}

Let us begin by showing that we may easily force instances of the lightface reflection principles as follows.

\begin{theorem}\label{Theorem.Cohen-real-forces-lightface-GMR}
 In the forcing extension $V[c]$ arising by forcing to add a Cohen real, the lightface ground-model reflection principle holds, and indeed, the ground-model reflection principle
 holds for assertions with arbitrary parameters from $V$.
\end{theorem}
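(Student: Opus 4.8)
The plan is to exploit the self-similarity of Cohen forcing together with its weak homogeneity. Write $\Add(\omega,1)$ for Cohen forcing and let $c$ be $V$-generic. First I would split $c$ into two mutually generic Cohen reals: regarding $c\in 2^\omega$, set $c_0(n)=c(2n)$ and $c_1(n)=c(2n+1)$. Then $c_0$ and $c_1$ are mutually $V$-generic Cohen reals, $V[c]=V[c_0][c_1]=V[c_0,c_1]$, and by mutual genericity $c_0\notin V[c_1]$ and $c_1\notin V[c_0]$. In particular $V[c_1]$ is a \emph{non-trivial} ground of $V[c]$: we have $V[c]=V[c_1][c_0]$, where $c_0$ is Cohen-generic over $V[c_1]$, and $V[c_1]\ofneq V[c]$ since $c_0$ witnesses the properness of the inclusion. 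The key additional observation is that $V[c_1]$ is itself an $\Add(\omega,1)$-generic extension of $V$, exactly as $V[c]$ is.

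Now suppose $V[c]\satisfies\varphi(a)$ for some parameter $a\in V$. Since $\Add(\omega,1)$ is weakly homogeneous and all parameters in $\varphi(\check a)$ are checks of ground-model sets, the Boolean value $\boolval{\varphi(\check a)}$ is either $0$ or $\one$; as the generic extension $V[c]$ satisfies $\varphi(a)$, the value is $\one$, i.e. $\one\forces^V_{\Add(\omega,1)}\varphi(\check a)$. Consequently \emph{every} $\Add(\omega,1)$-generic extension of $V$ satisfies $\varphi(a)$. In particular $V[c_1]$, being such an extension, satisfies $V[c_1]\satisfies\varphi(a)$. Since $a\in V\of V[c_1]$ and $V[c_1]$ is a non-trivial ground of $V[c]$, this $V[c_1]$ witnesses the ground-model reflection principle for $\varphi(a)$ in $V[c]$. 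As $a\in V$ and $\varphi$ were arbitrary, this establishes the ground-model reflection principle in $V[c]$ for all assertions with parameters from $V$ (and a fortiori its lightface form).

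The conceptual heart of the argument---and the step most likely to mislead---is the choice of ground. One must resist two tempting but inadequate moves. Reflecting to the obvious ground $V$ need not work, since $V\satisfies\varphi(a)$ can genuinely fail even when $\one$ forces $\varphi(\check a)$ over $V$. And arguing directly within the single homogeneous step from $V[c_1]$ up to $V[c]$ only yields $\one\forces^{V[c_1]}\varphi(\check a)$, which again speaks about the extension rather than about $V[c_1]$ itself. The essential trick is that the chosen ground $V[c_1]$ is simultaneously a ground of $V[c]$ and a Cohen extension of the \emph{common} base model $V$, so that weak homogeneity applied over $V$ transfers the truth of $\varphi(a)$ from one Cohen extension of $V$ to another. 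The only routine verifications left are the mutual-genericity facts used to split $c$ and to see that $V[c_1]$ is a non-trivial ground, both of which are standard for product Cohen forcing.
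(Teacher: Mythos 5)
Your proof is correct and follows essentially the same route as the paper: the paper likewise takes the ground $V[d]$ where $d$ is formed from every other digit of $c$, and uses the homogeneity of Cohen forcing to conclude that $\varphi(\check a)$ is forced by every condition and hence holds in $V[d]$. Your additional commentary on why one cannot simply reflect to $V$ or argue over $V[c_1]$ is a helpful elaboration, but the underlying argument is the same.
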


\begin{proof}
Suppose that an assertion $\varphi(a)$ is true in $V[c]$, where $a\in V$. By the homogeneity of the forcing, it follows that $\varphi(a)$ is forced by every condition, and so it will be
true in $V[d]$, where $d$ is the Cohen real obtained by retaining every other digit of $c$ and using them to form a new real number. Since $V[d]$ is a proper inner model and
indeed a ground of $V[c]$, it follows that the ground-model reflection principle holds in $V[c]$ for first-order assertions having parameters in $V$.
\end{proof}

We could have allowed any parameter $a\in V[c]$ for which $V[a]\ofneq V[c]$, since the quotient forcing is again that of adding a Cohen real. We cannot necessarily allow $c$
itself as a parameter, since $L[c]$ satisfies the statement $V=L[c]$, using $c$ as a parameter, but no proper inner model of $L[c]$ satisfies this statement. Meanwhile, the proof
of theorem \ref{Theorem.Forcing-GMR} will show that over some models, one can allow even $c$ as a parameter for the ground-model reflection principle in $V[c]$.

Cohen forcing is hardly unique with the property mentioned in theorem \ref{Theorem.Cohen-real-forces-lightface-GMR}, since essentially the same argument works with many
other kinds of forcing, such as random forcing or Cohen forcing at higher cardinals. Indeed, let us now push the idea a little harder with class forcing so as to achieve the full
principle, with arbitrary parameters, including all the new parameters of the forcing extension.

\begin{theorem}\label{Theorem.Forcing-GMR}
 Every model of \ZFC\ has a class-forcing extension satisfying the ground-model reflection principle, with arbitrary parameters from the extension.
\end{theorem}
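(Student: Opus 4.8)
The plan is to globalize the homogeneity idea of theorem~\ref{Theorem.Cohen-real-forces-lightface-GMR} across all regular cardinals, arranging that every set of the extension is captured \emph{before} some coordinate of the forcing while each individual coordinate remains a homogeneous, self-similar \emph{set} forcing. Working over a model $V\satisfies\ZFC$, I would force with the Easton-support class product $\P=\prod_{\kappa\in\REG}\Add(\kappa,\kappa)$, where $\Add(\kappa,\kappa)$ adds $\kappa$ many mutually generic Cohen subsets of $\kappa$ with conditions of size ${<}\kappa$. This is a tame class forcing, so the extension $\bar V=V[G]$ satisfies \ZFC. For each regular $\delta$ the product factors as $\P\cong\P_{{<}\delta}\times\P_{\geq\delta}$, with the tail $\P_{\geq\delta}=\prod_{\kappa\geq\delta}\Add(\kappa,\kappa)$ being ${<}\delta$-closed over $V[G_{{<}\delta}]$, and for each single regular $\kappa_0$ it factors as $\P\cong\P^{\kappa_0}\times\Add(\kappa_0,\kappa_0)$, where $\P^{\kappa_0}=\prod_{\kappa\neq\kappa_0}\Add(\kappa,\kappa)$. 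I claim $\bar V$ satisfies the ground-model reflection principle with arbitrary parameters.

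So fix $a\in\bar V$ with $\bar V\satisfies\varphi(a)$. Using the ${<}\delta$-closure of $\P_{\geq\delta}$, a standard factoring argument places $a$ low in the forcing: choosing a regular $\delta$ with $\Card{\TC(\{a\})}^{\bar V}<\delta$, no new code for $a$ is added by the tail, so $a\in V[G_{{<}\delta}]$. Now fix any regular $\kappa_0\geq\delta$ and let $N=V[G\restrict(\REG\minus\set{\kappa_0})]$ be the extension of $V$ by the product omitting the single coordinate $\kappa_0$. Since $\kappa_0\geq\delta$, the generic $G_{{<}\delta}$ is part of $G\restrict(\REG\minus\set{\kappa_0})$, so $V[G_{{<}\delta}]\of N$ and hence $a\in N$. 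Moreover $\bar V=N[g_{\kappa_0}]$, where $g_{\kappa_0}$ is the $\Add(\kappa_0,\kappa_0)$-generic added at coordinate $\kappa_0$; this is a genuine \emph{set}-forcing extension of $N$.

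The forcing $\Add(\kappa_0,\kappa_0)$ is weakly homogeneous and self-similar: reindexing $\kappa_0\cong\kappa_0\squnion\kappa_0$ gives an isomorphism $\Add(\kappa_0,\kappa_0)\cong\Add(\kappa_0,\kappa_0)\times\Add(\kappa_0,\kappa_0)$, under which $g_{\kappa_0}$ decomposes into a pair $(g^0,g^1)$ of mutually $N$-generic filters, each $\Add(\kappa_0,\kappa_0)$-generic over $N$, with $\bar V=N[g^0][g^1]$. Put $W=N[g^0]$. Then $W$ is transitive, contains all ordinals, and satisfies \ZFC; we have $a\in N\of W$; and $W\ofneq\bar V$ is a \emph{proper set-forcing ground}, since $\bar V=W[g^1]$ with $g^1$ generic over $W$ for the nontrivial set forcing $\Add(\kappa_0,\kappa_0)$. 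Finally, because $a\in N$ and $\Add(\kappa_0,\kappa_0)$ is weakly homogeneous over $N$, the truth of $\varphi(a)$ in $\bar V=N[g_{\kappa_0}]$ gives $\one\forces^N_{\Add(\kappa_0,\kappa_0)}\varphi(\check a)$; as $W=N[g^0]$ is itself an $\Add(\kappa_0,\kappa_0)$-generic extension of $N$, we conclude $W\satisfies\varphi(a)$, exactly as $V[d]\satisfies\varphi(a)$ in the Cohen argument. This is the required reflection for the arbitrary parameter $a$.

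The main obstacle I expect is the bookkeeping of proper-\emph{class} forcing rather than any single ingenious step. One must verify that $\P$ and the omitting-coordinate product $\P^{\kappa_0}$ are tame enough that $\bar V$ and $N$ satisfy \ZFC, that the two factorizations of $\P$ genuinely hold with Easton support, and that the closure of $\P_{\geq\delta}$ over $V[G_{{<}\delta}]$ really does confine each set $a$ below some $\delta$. By contrast, the decisive homogeneity-and-self-similarity step takes place entirely at the \emph{set}-forcing level $\Add(\kappa_0,\kappa_0)$ over $N$, where the boolean-value argument is completely standard and the resulting $W$ is a bona fide set-forcing ground; isolating this step is precisely what makes the arbitrary-parameter conclusion go through while keeping the witnessing grounds within the set-forcing sense demanded by the principle.
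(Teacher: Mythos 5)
Your proposal is correct and follows essentially the same route as the paper: an Easton-support class product of Cohen-type forcings, with the proper ground obtained by splitting the generic at a single coordinate lying beyond the parameter into two mutually generic halves. The only real difference is the transfer step---you invoke weak homogeneity of $\Add(\kappa_0,\kappa_0)$ over the model $N$ omitting that coordinate, whereas the paper fixes a condition $p$ forcing $\varphi(\dot a)$ with $p$ and the name $\dot a$ supported below the split coordinate and applies the forcing theorem directly; note that your step ``$a\in V[G_{<\delta}]$'' really uses Easton's lemma (the tail is only $<\delta$-\emph{distributive}, not $<\delta$-closed, over $V[G_{<\delta}]$), which is why the paper arranges \GCH\ by preliminary forcing.
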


\begin{proof}
By preliminary forcing if necessary, we may assume that \GCH\ holds. Let $\P$ be the proper-class Easton-support product of the forcing posets $\Add(\delta,1)$ that add a
Cohen subset to every cardinal $\delta$ in some proper class of regular cardinals. Suppose that $G\of\P$ is $V$-generic, and consider the extension $V[G]$, which is a model of
\ZFC. Suppose that $V[G]\satisfies\varphi(a)$ for some first-order assertion $\varphi$ and set $a$. So there is some condition $p\in\P$ forcing $\varphi(\dot a)$ for some name
$\dot a$ with $\dot a_G=a$. Let $\delta$ be a stage of forcing that is larger than the support of $p$ and any condition in the name $\dot a$, and let $G_0$ be just like $G$ on all
coordinates other than $\delta$, except that on coordinate $\delta$ itself, we take only every other digit of the generic subset of $\delta$ that was added, re-indexed so as to make
a generic subset of $\delta$. Thus, the filter $G_0\of\P$ is $V$-generic for this forcing, $p\in G_0$ and $V[G]=V[G_0][g]$, where $g$ consists of the information on the
complementary digits of the subset of $\delta$. So $V[G_0]$ is a proper inner model of \ZFC, and since $p\in G_0$ and $\dot a_{G_0}=\dot a_G$, it follows that
$V[G_0]\satisfies\varphi(a)$, fulfilling the desired instance of ground-model reflection.
\end{proof}

\begin{corollary}\label{Corollary.Conservativity}
 The inner-model and ground-model reflection principles are each conservative over \ZFC\ for $\Pi_2$ assertions about sets. In other words, any $\Sigma_2$ assertion that is
 consistent with \ZFC\ is also consistent with \ZFC\ plus the ground-model reflection principle.
\end{corollary}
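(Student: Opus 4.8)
The plan is to prove the equivalent consistency reformulation given in the statement: every $\Sigma_2$ sentence $\sigma$ that is consistent with \ZFC\ is also consistent with \ZFC\ together with the ground-model reflection principle. The corollary for $\Pi_2$-conservativity is then just the contrapositive of this, read through the completeness theorem, and the inner-model case follows at once since ground-model reflection implies inner-model reflection (so any model of \ZFC\ plus ground-model reflection plus $\sigma$ also witnesses the consistency of \ZFC\ plus inner-model reflection plus $\sigma$). Writing $\sigma$ as $\exists x\,\psi(x)$ with $\psi$ a $\Pi_1$ formula, say $\psi(x)=\forall y\,\varphi_0(x,y)$ with $\varphi_0\in\Sigma_0$, I would begin with any model $M\models\ZFC+\sigma$ and fix a witness $a\in M$ with $M\models\psi(a)$.

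Next I would invoke the class forcing of theorem \ref{Theorem.Forcing-GMR}, but tuned so as to preserve a rank-initial segment containing $a$. Recall that the forcing there is the Easton-support product $\prod_{\delta\in C}\Add(\delta,1)$ taken over a proper class $C$ of regular cardinals. Choosing $C$ so that its least element $\delta_0$ is a regular uncountable cardinal with $a\in H_{\delta_0}^M$, every factor is ${<}\delta$-closed for some $\delta\geq\delta_0$, so the whole product is ${<}\delta_0$-closed. A ${<}\delta_0$-closed forcing adds no new subsets of any set of size ${<}\delta_0$, and hence $H_{\delta_0}^{M[G]}=H_{\delta_0}^{M}$. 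By theorem \ref{Theorem.Forcing-GMR}, the extension satisfies $M[G]\models\ZFC$ together with the ground-model reflection principle. It then remains only to check that $\sigma$ survives, i.e.\ that $M[G]\models\psi(a)$, for then $a$ witnesses $\sigma$ in $M[G]$.

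For this I would use the \Levy\ absoluteness fact that $H_{\delta_0}\prec_{\Sigma_1}V$ for every regular uncountable $\delta_0$, applied inside both $M$ and $M[G]$: a $\Pi_1$ assertion with parameter in $H_{\delta_0}$ holds in the whole universe if and only if it holds in $H_{\delta_0}$. Since $a\in H_{\delta_0}^{M}=H_{\delta_0}^{M[G]}$ and $M\models\psi(a)$, we obtain $H_{\delta_0}^{M}\models\psi(a)$, hence $H_{\delta_0}^{M[G]}\models\psi(a)$, and therefore $M[G]\models\psi(a)$, as desired. The main obstacle, and the step where a naive approach fails, is precisely this upward passage: a $\Pi_1$ truth is in general only downward absolute, so moving from $M$ up to the forcing extension could in principle destroy $\psi(a)$ by introducing a new high-rank counterexample $y$ to $\forall y\,\varphi_0(a,y)$. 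The point is that the negation $\exists y\,\lnot\varphi_0(a,y)$ is $\Sigma_1$ with parameter $a\in H_{\delta_0}$, so its truth value is already decided inside $H_{\delta_0}$, which the ${<}\delta_0$-closure of the forcing leaves untouched; no such new counterexample can appear. This is exactly the interplay between the closure of the forcing and the locality of $\Sigma_2$-truth that is captured by the phrase about preserving any desired rank-initial segment of the universe.
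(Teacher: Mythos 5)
Your proposal is correct and follows essentially the same route as the paper: start the Easton-support product of theorem \ref{Theorem.Forcing-GMR} above a rank-initial segment containing the witness, so that the relevant $H_{\delta_0}$ (the paper says $V_\theta$) is preserved, and then use the locality of $\Sigma_2$-truth to see that the witnessed $\Pi_1$ fact survives into the extension. The paper leaves the absoluteness step implicit, whereas you spell it out via L\'evy $\Sigma_1$-absoluteness for $H_{\delta_0}$; this is the same argument in slightly more detail.
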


\begin{proof}
The point is that the previous argument, by starting the forcing sufficiently high, shows that any given model of \ZFC\ can be extended to a model of ground-model reflection while
preserving any particular $V_\theta$ and therefore the truth of any particular $\Sigma_2$ assertion. Thus, any $\Sigma_2$ assertion that is consistent with \ZFC\ is also consistent
with ground-model reflection and hence also with inner-model reflection. By contraposition, any $\Pi_2$ assertion that is provable from \ZFC\ or \GBC\ plus the inner-model or
ground-model reflection principles is provable in \ZFC\ alone.
\end{proof}

The \Levy-Montague reflection principle produces for every natural number $n$ in the metatheory a proper class club $C^{(n)}$ of cardinals $\theta$, the {\df $\Sigma_n$-correct
cardinals}, for which $V_\theta\elesub_{\Sigma_n} V$. But this kind of reflection can never hold for inner models:

\begin{observation}\label{Observation.Sigma_1-inner-model}
 If $W$ is an inner model of \ZF\ and $W\elesub_{\Sigma_1} V$, then $W=V$.
\end{observation}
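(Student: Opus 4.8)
The plan is to argue by contradiction via a minimal-rank counterexample, producing a single $\Sigma_1$ sentence with parameters in $W$ that holds in $V$ but fails in $W$. The conceptual point is that one should not try to \emph{define} the missing set directly; instead one lets $W$'s own power-set operation testify that a subset is missing, and then $\Sigma_1$-elementarity forces a missing subset back into $W$.

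First I would suppose $W\ofneq V$ and let $\alpha$ be least such that some set of rank $\alpha$ lies outside $W$, fixing such a set $a$ with $\rank(a)=\alpha$ and $a\notin W$. By the minimality of $\alpha$, every set of rank $<\alpha$ belongs to $W$, so $V_\alpha\of W$. Since rank is absolute between transitive models of $\ZF$ and $W$ contains every ordinal, $W$'s computation of the $\alpha$-th level satisfies $(V_\alpha)^W=V_\alpha\cap W=V_\alpha$, so in particular $V_\alpha\in W$. Note also that $a\of V_\alpha$, because $\rank(a)=\alpha$.

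Next, working inside $W$, I form $p=(\mathcal{P}(V_\alpha))^W$, the power set of $V_\alpha$ as computed in $W$; since $V_\alpha\in W$ and $W\satisfies\ZF$, we have $p\in W$, and by $\Delta_0$-absoluteness of the relation ``$x\of V_\alpha$'' this $p$ is exactly $\mathcal{P}(V_\alpha)\cap W$. Now consider the assertion, with parameters $V_\alpha,p\in W$,
\[ \exists x\,\bigl(x\of V_\alpha\,\wedge\,x\notin p\bigr). \]
Both conjuncts are $\Delta_0$, hence absolute between $W$ and $V$, so the whole statement is genuinely $\Sigma_1$. In $V$ it is witnessed by $a$: indeed $a\of V_\alpha$, while $a\notin p$ because $p\of W$ and $a\notin W$. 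Thus $V\satisfies\exists x\,(x\of V_\alpha\wedge x\notin p)$. On the other hand, by the very definition of $p$ as the power set of $V_\alpha$ inside $W$, we have $W\satisfies\forall x\,(x\of V_\alpha\implies x\in p)$, so the displayed $\Sigma_1$ sentence is \emph{false} in $W$. This contradicts $W\elesub_{\Sigma_1}V$, so no such $a$ exists and $W=V$.

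The argument uses no instance of choice, so it applies to inner models of $\ZF$ exactly as stated. The only step I would check with care—and the main (minor) obstacle—is the bookkeeping showing that the witnessing statement is truly $\Sigma_1$ with parameters already available in $W$: this rests on having secured $V_\alpha\in W$ (so that $p=(\mathcal{P}(V_\alpha))^W$ is a legitimate parameter) and on the boundedness of the matrix ``$x\of V_\alpha\wedge x\notin p$''. Everything else is immediate from $\Delta_0$-absoluteness and the definition of $p$.
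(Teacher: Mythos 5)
Your proof is correct and is essentially the paper's argument: both exhibit a $\Sigma_1$ assertion with parameters from $W$ asserting that some set escapes a collection that $W$ computes to be exhaustive, true in $V$ via the missing set but false in $W$. The paper uses $u=(V_\theta)^W$ for any $\theta$ above the rank of the missing set (avoiding your minimal-rank step and the need to verify $V_\alpha\in W$), while you use $(\mathcal{P}(V_\alpha))^W$, which in exchange makes the $\Delta_0$-ness of the matrix completely transparent; the difference is cosmetic.
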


\begin{proof}
 Assume that $W$ is a transitive class model of \ZF\ containing all ordinals and that $W\elesub_{\Sigma_1} V$. If $W\neq V$, then there is some set $a$ in $V$ that is not in $W$.
 Let $\theta$ be above the rank of $a$ and let $u=(V_\theta)^W$. So $V$ models ``there is a set of rank less than $\theta$ that is not in $u$.'' This is a $\Sigma_1$ assertion
 about $\theta$ and $u$, witnessed by a rank function into $\theta$. But it is not true in $W$, by the choice of $u$. So it must be that $W=V$.
\end{proof}

So the situation of width reflection is somewhat different in character from that of height reflection, where we have $H_\kappa\elesub_{\Sigma_1} V$ for every uncountable cardinal
$\kappa$ and more generally $V_\theta\elesub_{\Sigma_n} V$ for all cardinals $\theta$ in the class club $C^{(n)}$. Observation \ref{Observation.Sigma_1-inner-model} shows that
there is no analogue of this for width reflection.

\section{Large cardinals} \label{sec:large}

Next, we point out that the inner-model reflection principle is an outright consequence of sufficient large cardinals. We present several such hypotheses that suffice, in decreasing 
order of magnitude. The theorem below discusses measurability.

\begin{theorem}\label{Theorem.measurables-imply-inner-model-reflection}\
  \begin{enumerate}
    \item
    If there is a measurable cardinal, then the lightface inner-model reflection principle holds.
    \item
    Indeed, if $\kappa$ is measurable, then the inner-model reflection principle holds for assertions with parameters in $V_\kappa$.
    \item
    Consequently, if there is a proper class of measurable cardinals, then the full inner-model reflection principle holds for arbitrary parameters.
  \end{enumerate}
\end{theorem}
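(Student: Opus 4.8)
The plan is to use the ultrapower embedding furnished by a measurable cardinal. The key point is that an elementary embedding $j\colon V\to M$ with critical point $\kappa$ fixes every parameter of rank below $\kappa$, so that $\varphi(a)$ transfers verbatim from $V$ to the proper inner model $M$, which will serve as the witness for inner-model reflection.

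First I would prove (2), which is the heart of the matter. Fix a $\kappa$-complete nonprincipal ultrafilter $U$ on $\kappa$ and form the ultrapower $j\colon V\to M=\Ult(V,U)$, where $M$ is transitive by the well-foundedness of $U$-ultrapowers. Then $M$ is an inner model of \ZFC\ containing all the ordinals, with $\cp(j)=\kappa$, and I would record the standard fact that $M\ofneq V$ is a \emph{proper} inner model (for instance because $U\notin M$), so that $M$ is a legitimate witness. Now for a parameter $a\in V_\kappa$ I would use that $j\restrict V_\kappa=\id$, proved by the usual induction on rank from $\cp(j)=\kappa$; in particular $j(a)=a$ and $a\in V_\kappa\of M$. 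Hence if $V\satisfies\varphi(a)$, then by elementarity $M\satisfies\varphi(j(a))$, i.e. $M\satisfies\varphi(a)$. Since $M$ is a proper inner model containing $a$, this is exactly the required instance of inner-model reflection, proving (2).

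Then (1) is immediate as the parameter-free special case: for a sentence $\sigma$ true in $V$, elementarity gives $M\satisfies\sigma$ with no need to control any parameter (equivalently, take $a=\emptyset\in V_\kappa$). For (3), given any set $a$ and a proper class of measurable cardinals, I would simply choose a measurable $\kappa$ with $a\in V_\kappa$, possible since the measurables are unbounded in the ordinals, and apply (2) to obtain a proper inner model $M$ with $a\in M$ and $M\satisfies\varphi(a)$; as $a$ was arbitrary, this yields the full principle with arbitrary parameters.

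The only step requiring genuine care is the assertion that the ultrapower $M$ is a proper subclass of $V$, which is where the large-cardinal hypothesis does the real work; everything else is a routine application of the basic theory of ultrapower embeddings. I expect this properness fact to be the main (though entirely standard) obstacle, and I would dispatch it by invoking the classical result that the target of the ultrapower by a measure omits the measure itself, so that $M\ofneq V$.
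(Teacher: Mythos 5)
Your proposal is correct and follows essentially the same route as the paper: take the ultrapower embedding $j\colon V\to M$ by a measure on $\kappa$, note that $a\in V_\kappa$ is fixed by $j$ since it lies below the critical point, and conclude $M\satisfies\varphi(a)$ by elementarity, with $M$ a proper inner model (the paper simply asserts properness, while you supply the standard witness $U\notin M$). The derivations of (1) and (3) from (2) are likewise the intended ones.
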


\begin{proof}
 The theorem is easy to prove. Suppose that $\varphi(a)$ is true in $V$, where the parameter $a$ is in $V_\kappa$ for some measurable cardinal $\kappa$. Let $j\!:V\to M$ be an
ultrapower embedding by a measure on $\kappa$, with critical point $\kappa$, into a transitive class $M$, which must be a proper inner model, definable from the measure. Since
$a\in V_\kappa$, below the critical point, it follows that $j(a)=a$ and consequently $M\satisfies\varphi(a)$ by the elementarity of $j$. So $\varphi(a)$ is true in a proper inner model,
thereby witnessing this instance of the inner-model reflection principle.
\end{proof}

Using this, we can separate the inner-model reflection principle from the ground-model reflection principle. They do not coincide.

\begin{corollary}\label{Corollary.Proper-class-of-measurables-gets-IMR-plus-not-GMR}
 If \ZFC\ is consistent with a proper class of measurable cardinals, then there is a model of \ZFC\ in which the inner-model reflection principle holds, but the ground-model reflection
 principle fails.
\end{corollary}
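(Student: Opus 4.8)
The plan is to reduce the corollary to a single consistency statement: that \ZFC\ is consistent with the simultaneous assertion of a proper class of measurable cardinals and the ground axiom \GA. Once we have a model $W\satisfies\ZFC$ carrying a proper class of measurable cardinals and satisfying \GA, Theorem~\ref{Theorem.measurables-imply-inner-model-reflection}(3) gives that the full inner-model reflection principle holds in $W$, while \GA\ refutes the ground-model reflection principle exactly as noted when the ground axiom was introduced: since the only ground of $W$ is $W$ itself, there is no proper ground at all, so no true statement can reflect to one, and the ground-model reflection principle fails. Thus everything comes down to producing a model of \ZFC\ with a proper class of measurables in which \GA\ holds.

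To build such a model I would fix, using the hypothesis, a model $V\satisfies\ZFC$ together with a proper class of measurable cardinals, and then force \GA\ over $V$ while preserving every measurable cardinal. The crucial structural point is that this must be accomplished by class forcing rather than set forcing: a non-trivial set-forcing extension $V[G]$ always retains $V$ as a ground and hence can never satisfy \GA, whereas a proper-class iteration can eliminate all set-forcing grounds, since the class-forcing ground $V$ of such an extension need not itself be a set-forcing ground. I would use the progressive coding forcing of Reitz \cite{Reitz2006:Dissertation, Reitz2007:TheGroundAxiom} (see also \cite{FuchsHamkinsReitz2015:Set-theoreticGeology}), a reverse-Easton class iteration that codes the universe into the continuum pattern densely enough that the extension becomes self-encoding and therefore has no non-trivial set ground, yielding \GA; being a tame reverse-Easton iteration, it preserves \ZFC.

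The real work lies in ensuring that the whole proper class of measurable cardinals survives this iteration. I would arrange the coding to act only at cardinals that are not measurable — for instance at a suitable proper class of successor cardinals — so that at each measurable $\kappa$ no coding is performed at $\kappa$ itself. Then the iteration factors at $\kappa$ as $\P_\kappa * \Qdot_{\geq\kappa}$, where $\P_\kappa$ has size $\kappa$ and is $\kappa$-c.c.\ and the tail $\Qdot_{\geq\kappa}$ is sufficiently closed to add no new subsets of $\kappa$; a standard lifting argument then extends an ultrapower embedding $j\colon V\to M$ through the iteration, witnessing that $\kappa$ remains measurable in the extension. Carried out uniformly across the class, this preserves all the measurables at once.

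The main obstacle is precisely this preservation step: one must design the coding to be simultaneously dense enough to force \GA\ and gentle enough to admit the embedding liftings at every measurable cardinal, and one must verify that these two demands do not conflict over a proper class of stages. I expect the cleanest route is to invoke the large-cardinal-preserving forms of the \GA-forcing already developed by Reitz, which handle measurable (and stronger) cardinals by exactly such liftings. An alternative that sidesteps forcing entirely is to pass to a fine-structural inner model $L[\vec U]$ carrying a coherent sequence of measures on a proper class of cardinals: such a model has a proper class of measurables and, by the generic absoluteness of its definition, satisfies \GA\ just as $L[\mu]$ and $K$ do. Either route establishes the required consistency and hence the corollary.
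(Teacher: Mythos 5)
Your proposal is correct and takes essentially the same route as the paper: the paper likewise obtains the model by a class-length coding iteration (a lottery iteration coding every set into the \GCH\ pattern at successors of measurables), preserves the measurable cardinals by the standard lifting arguments, applies theorem \ref{Theorem.measurables-imply-inner-model-reflection}, and argues that every set-forcing ground inherits the coding and hence that there are no non-trivial grounds — which is your ground-axiom step stated without the name. It even mentions your alternative route through a generically absolute canonical inner model, via corollary \ref{Corollary.K-has-IMR-not-GMR}.
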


\begin{proof}
  One can prove this as in corollary \ref{Corollary.K-has-IMR-not-GMR} below, using the fact that the core model $K$ has no non-trivial grounds; but let us give a forcing proof. If there 
are a proper class of measurable cardinals in $V$, then there is a class-forcing extension $V[G]$ preserving them, in which every set is coded into the \GCH\ pattern. This idea was a
central theme of \cite{Reitz2006:Dissertation, Reitz2007:TheGroundAxiom}; but let us sketch the details. After forcing \GCH, if necessary, we perform a lottery iteration, which at
the successor of every measurable cardinal either forces a violation of \GCH\ or performs trivial forcing. Generically, every set is coded into the resulting pattern. The standard
lifting arguments, such as those in Hamkins \cite{Hamkins2000:LotteryPreparation, Hamkins:ForcingAndLargeCardinals}, show that all measurable cardinals are preserved, and so 
by theorem \ref{Theorem.measurables-imply-inner-model-reflection} it follows that $V[G]$ satisfies the inner-model reflection principle. Meanwhile, because every set in $V[G]$ is
coded into the \GCH\ pattern, by placing sets into much larger sets it follows that every set is coded unboundedly often. Since set forcing preserves the \GCH\ pattern above the
size of the forcing, every ground model has this coding. So $V[G]$ can have no non-trivial grounds and consequently does not satisfy the ground-model reflection principle.
\end{proof}

We can improve the large cardinal hypothesis of the preceding results by using the work of Vickers-Welch \cite{VickersWelch2001:jMtoV}.

\begin{theorem}\label{Theorem.Ord-is-Ramsey-implies-IMR}
 If $\Ord$ is Ramsey, then the inner-model reflection principle holds.
\end{theorem}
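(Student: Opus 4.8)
The plan is to import the relevant machinery of Vickers--Welch \cite{VickersWelch2001:jMtoV} and then to argue exactly as in the proof of Theorem~\ref{Theorem.measurables-imply-inner-model-reflection}, with a proper class of indiscernibles playing the role that a single measurable cardinal played there. The hypothesis that $\Ord$ is Ramsey is a partition property of the class of ordinals, and its principal consequence for us, established by Vickers--Welch, is the existence of a proper inner model $M\ofneq V$ of \ZFC\ together with a nontrivial elementary embedding $j\colon M\to V$. Such an embedding arises from a proper class $I$ of (good) indiscernibles: one lets $M$ be the transitive collapse of the Skolem hull generated by $I$ under definable Skolem functions, available from the global well-order in the class theory \GBC, and one induces $j$ by shifting the indiscernibles. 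Because $I$ is \emph{thin}, this hull is a proper subclass, so $M\ofneq V$; because $I$ is a proper class of ordinals, its collapse contains all the ordinals, so $M$ really is an inner model.

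The key refinement I would extract is a parametrized version: given any set $a$, I would arrange that $a$, and indeed all of $V_{\rank(a)+1}$, lies in $M$ and is fixed by $j$. This is possible because the class of indiscernibles can be taken to lie entirely above $\rank(a)$, so that the critical point of $j$ exceeds $\rank(a)$; feeding the set $V_{\rank(a)+1}$ in as additional parameters of the Skolem hull then keeps the hull a proper subclass while guaranteeing $a\in M$ and $j(a)=a$. This is the analogue of the step in Theorem~\ref{Theorem.measurables-imply-inner-model-reflection} in which $a\in V_\kappa\of M$ lies below the critical point of the ultrapower embedding and is therefore fixed.

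With this in hand the reflection argument is immediate. Suppose $\varphi(a)$ holds in $V$. Choose, as above, a proper inner model $M\ofneq V$ with $a\in M$ and an elementary embedding $j\colon M\to V$ with $j(a)=a$. Then by elementarity
$$ M\satisfies\varphi(a)\iff V\satisfies\varphi(j(a))\iff V\satisfies\varphi(a), $$
and since the right-hand side holds, so does the left, so $M\satisfies\varphi(a)$. Thus $M$ witnesses this instance of the inner-model reflection principle, and as $a$ and $\varphi$ were arbitrary, the full principle holds. For the lightface version no parameter bookkeeping is needed: any $j\colon M\to V$ supplied by Vickers--Welch already yields $M\satisfies\sigma\iff V\satisfies\sigma$ for every sentence $\sigma$.

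The substance of the argument, and its main obstacle, is not this final elementarity computation but the invocation of the Vickers--Welch analysis: deriving, from the bare partition property that $\Ord$ is Ramsey, an honest proper inner model $M$ and an elementary embedding $j\colon M\to V$ with prescribed fixed points. The delicate points there are ensuring that the hull of the indiscernibles is genuinely a proper subclass (so that $M\ofneq V$, rather than $M=V$, which would render the conclusion vacuous), that the construction goes through within \GBC\ with global choice, and that the critical point can be pushed above any prescribed rank so as to fix the given parameter $a$. These are precisely the features supplied by the Ramsey-ness of $\Ord$, and I would cite \cite{VickersWelch2001:jMtoV} for them rather than reprove them here.
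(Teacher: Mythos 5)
Your proposal is correct and follows essentially the same route as the paper: both invoke the Vickers--Welch construction to obtain, from the class of good indiscernibles, a nontrivial elementary embedding $j\colon M\to V$ with critical point above $\rank(a)$, and then conclude $M\satisfies\varphi(a)$ by elementarity. The only cosmetic difference is that the paper justifies $M\ofneq V$ by appeal to the Kunen inconsistency (a nontrivial $j\colon V\to V$ being impossible) rather than by your thinness-of-the-hull argument, but both points are anyway delegated to \cite{VickersWelch2001:jMtoV}.
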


\begin{proof}
Following \cite[definition 2.2; see also definition 1.1]{VickersWelch2001:jMtoV}, we say that $\Ord$ is Ramsey if and only if there is an unbounded class $I\of\Ord$ of good
indiscernibles for $\<V,\in>$. This is a second-order assertion in \GBC. One can arrange set models of this theory with first-order part $V_\kappa$, if $\kappa$ is a Ramsey cardinal,
and so the hypothesis ``$\Ord$ is Ramsey'' is strictly weaker in consistency strength than $\ZFC$ plus the existence of a Ramsey cardinal, which in turn is strictly weaker in
consistency strength than \ZFC\ plus the existence of a measurable cardinal.

Meanwhile, the argument of \cite[theorem 2.3]{VickersWelch2001:jMtoV} shows how to construct from the class $I$ a transitive class $M$ with a non-trivial elementary embedding
$j\!:M\to V$, where the critical point of $j$ can be arranged so as to be any desired element of $I$. Note that $M\ofneq V$ in light of the Kunen inconsistency. If $\varphi(a)$ holds
in $V$, then there is such a $j\!:M\to V$ with critical point above the rank of $a$ and therefore with $a\in M$ and $j(a)=a$. It follows by elementarity that $M\satisfies\varphi(a)$,
thereby fulfilling the inner-model reflection principle.
\end{proof}

In particular, any statement that is compatible with $\Ord$ being Ramsey is also compatible with the inner-model reflection principle, which makes a contrast to
corollary \ref{Corollary.Conservativity}. One can use theorem \ref{Theorem.Ord-is-Ramsey-implies-IMR} to weaken the hypothesis of
corollary \ref{Corollary.Proper-class-of-measurables-gets-IMR-plus-not-GMR} as follows, where we now use the core model rather than forcing. Note that if ``$\Ord$ is Ramsey''
holds, then the hypothesis of corollary \ref{Corollary.K-has-IMR-not-GMR} holds in an inner model.

\begin{corollary}\label{Corollary.K-has-IMR-not-GMR}
If the core model $K$ exists and satisfies ``Ord is Ramsey'', then $K$  satisfies the inner-model reflection principle, but not the ground-model reflection principle.
\end{corollary}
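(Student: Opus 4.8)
The plan is to prove the two assertions separately, as both reduce to facts already in hand. For the inner-model reflection principle, I would apply Theorem \ref{Theorem.Ord-is-Ramsey-implies-IMR} with $K$ playing the role of the ambient universe. By hypothesis $K$ satisfies ``$\Ord$ is Ramsey,'' so there is an unbounded class $I \of \Ord$ of good indiscernibles for $\langle K, \in\rangle$. The Vickers-Welch construction used in the proof of that theorem then produces, internally to $K$, transitive classes $M$ together with non-trivial elementary embeddings $j\colon M \to K$ whose critical point can be taken above the rank of any prescribed parameter $a \in K$. By the Kunen inconsistency each such $M$ is a proper inner model of $K$, and whenever $K \satisfies \varphi(a)$ we have $a \in M$, $j(a) = a$, and hence $M \satisfies \varphi(a)$ by elementarity. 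Thus every true $\varphi(a)$ reflects to a proper inner model, so $K$ satisfies the inner-model reflection principle.

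For the failure of ground-model reflection, the key point is that $K$ has no non-trivial grounds, which rests on the generic absoluteness of the core model (Jensen-Steel, under the no-inner-model-with-a-Woodin-cardinal hypothesis implicit in the existence of $K$): the formula defining $K$ is absolute between a model and each of its set-forcing extensions. Concretely, suppose toward a contradiction that $W \ofneq K$ is a ground, so $K = W[G]$ for some $W$-generic $G$. Computing the core model inside $W$ yields an inner model $K^W \of W$, and generic absoluteness applied in $W$ gives $K^W = K^{W[G]} = K^K$. Since the core model of $K$ is $K$ itself, we get $K^W = K$, whence $K = K^W \of W \of W[G] = K$, forcing $W = K$ and contradicting $W \ofneq K$. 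So the only ground of $K$ is $K$ itself, and therefore the ground-model reflection principle fails in $K$: any true sentence (say a tautology with an arbitrary parameter) has no proper ground into which to reflect.

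The main obstacle lies not in assembling these two arguments but in the cited infrastructure they rely on. On the positive side, one must check that the Vickers-Welch production of $j\colon M \to K$ proceeds from the indiscernibles $I$ purely internally to $K$, which is precisely what ``$K \satisfies$ `$\Ord$ is Ramsey'\,'' guarantees. On the negative side, the entire weight rests on the generic absoluteness of the $K$-definition; this is the deep ingredient, but it is a known theorem, and the no-Woodin hypothesis it requires is automatic here since ``$\Ord$ is Ramsey'' is far below a Woodin cardinal in consistency strength. With these two facts in place the corollary follows as above, and it genuinely weakens the measurable-cardinal hypothesis of Corollary \ref{Corollary.Proper-class-of-measurables-gets-IMR-plus-not-GMR}, replacing forcing by a direct appeal to the core model.
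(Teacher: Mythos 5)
Your proof is correct and takes essentially the same route as the paper's: apply Theorem \ref{Theorem.Ord-is-Ramsey-implies-IMR} inside $K$ for the positive half, and use the generic absoluteness of the definition of $K$ (available because the existence of the Jensen--Steel core model already presupposes that there is no inner model with a Woodin cardinal) to conclude that $K$ has no non-trivial grounds. The paper disposes of both halves in two sentences; your write-up simply makes explicit the computation $K^W = K^{W[G]} = K^K = K$ forcing $W = K$.
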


\begin{proof}
If $K\satisfies\Ord$ is Ramsey, then it satisfies the inner-model reflection principle by the previous theorem. And since $K$ is definable in a way that is generically absolute, it has
no non-trivial grounds and therefore cannot satisfy the ground-model reflection principle.
\end{proof}

We thank Philip Welch for allowing us to include the following observation showing that the assumption above can be further weakened. A large cardinal hypothesis in the region of 
long unfoldable cardinals suffices, see Welch \cite[definition 1.3]{welch}. For the present purpose, consider the following weakening of long unfoldability:

\begin{definition} \label{def:LU}
A cardinal $\kappa$ is \emph{LU} if and only if there is a definable proper inner model $M$ with $V_\kappa\prec M$.
\end{definition}

For simplicity, we stated the definition in \GBC, using the language of proper classes. Welch's definition appears more involved since it is being formalized in \ZFC; the appropriate 
version here would be the result of taking $S=\emptyset$ in \cite[definition 1.3]{welch}, see \cite[fact 1.1]{welch}.

\begin{theorem}[Welch]
If the class of LU cardinals is Mahlo in $V$, then the inner-model reflection principle holds.
\end{theorem}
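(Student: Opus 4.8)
The plan is to mimic the embedding argument of Theorem \ref{Theorem.Ord-is-Ramsey-implies-IMR}, but to replace the construction of an elementary embedding $j\colon M\to V$ with the elementary-submodel relation $V_\kappa\prec M$ that the definition of an LU cardinal supplies directly. Given an instance $\varphi(a)$ that is true in $V$, with $a$ an arbitrary parameter, I would first fix a natural number $n$ in the metatheory large enough that $\varphi$ is $\Sigma_n$, so that the truth of $\varphi(a)$ is detected by $\Sigma_n$-correct cardinals.

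The key step is to locate a single cardinal $\kappa$ that is simultaneously (i) LU, (ii) $\Sigma_n$-correct, and (iii) above the rank of $a$. This is exactly where the Mahlo hypothesis is used. \Levy-Montague reflection produces the closed unbounded class $C^{(n)}$ of $\Sigma_n$-correct cardinals, and since the class of LU cardinals is Mahlo, and hence stationary, it meets every club class; in particular it meets the club tail $C^{(n)}\setminus(\rank(a)+1)$. Choosing $\kappa$ in this intersection yields $a\in V_\kappa$ together with $V_\kappa\elesub_{\Sigma_n}V$.

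With such a $\kappa$ in hand the rest is immediate. Since $V\satisfies\varphi(a)$ and $V_\kappa\elesub_{\Sigma_n}V$ with $\varphi$ of complexity $\Sigma_n$, we obtain $V_\kappa\satisfies\varphi(a)$. By the definition of LU, fix a definable proper inner model $M$ with $V_\kappa\prec M$; as $V_\kappa$ is a full elementary submodel of $M$ and $a\in V_\kappa\of M$, elementarity transfers the first-order assertion $\varphi(a)$ upward to give $M\satisfies\varphi(a)$. Because $M$ is a proper inner model of \ZFC\ with $a\in M$ and $M\ofneq V$, this witnesses the desired instance of inner-model reflection; and since $a$ was arbitrary, the full principle follows.

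I expect the only genuine subtlety to be the coordination of (ii) and (iii): an arbitrary LU cardinal need not detect the truth of $\varphi$, so it is essential that the LU cardinals be not merely unbounded but stationary, in order to be caught inside $C^{(n)}$ above $\rank(a)$. Everything else — the full elementarity $V_\kappa\prec M$ and the properness $M\ofneq V$ — is handed to us by the definition of LU, so no further large-cardinal machinery (ultrapowers or indiscernibles) is needed beyond what Welch's notion already packages.
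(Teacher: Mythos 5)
Your proposal is correct and follows essentially the same route as the paper's proof: use L\'evy--Montague reflection to produce a definable club of cardinals $\kappa$ with $a\in V_\kappa$ and $V_\kappa\models\varphi(a)$, use the Mahloness of the LU cardinals to find an LU cardinal in that club, and then transfer $\varphi(a)$ to the definable proper inner model $M$ via $V_\kappa\prec M$. The only cosmetic difference is that you route the reflection through the club $C^{(n)}$ of $\Sigma_n$-correct cardinals cut above $\rank(a)$, whereas the paper works directly with the club of $\alpha$ satisfying $V_\alpha\models\varphi(a)$; both require the Mahloness to be with respect to clubs definable from parameters.
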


``Mahlo'' is understood here to mean ``definably Mahlo'', that is, the LU cardinals form a class stationary with respect to all definable class clubs, as in 
\cite[remark following theorem 1.4]{welch}.

\begin{proof}
Suppose $\varphi(a)$ holds in $V$. By the L\'evy-Montague reflection principle, there is a club $C$ of ordinals $\alpha$ such that $V_\alpha\models \varphi(a)$. Since the class of 
LU cardinals is Mahlo, there is such a cardinal $\kappa$ with $a\in V_\kappa$ and $V_\kappa\models\varphi(a)$. Letting $M$ be as in definition \ref{def:LU} for $\kappa$, we have 
$M\models\varphi(a)$.
\end{proof}

Similarly, corollary \ref{Corollary.K-has-IMR-not-GMR} holds replacing the assertion that $K$ satisfies ``Ord is Ramsey'' with the weaker claim that $K$ models ``the LU cardinals are 
Mahlo''. 

As shown in \cite[theorem 1.4]{welch}, the assertion that the class of LU cardinals is Mahlo is strictly weaker in consistency strength than $\omega^2$-$\Pi^1_1$-Determinacy, an 
assumption significantly weaker than ``Ord is Ramsey''. 

\begin{remark}
As suggested by item (3) of definition \ref{def:IMR}, there are natural hierarchies of inner-model reflection principles obtained by restricting the complexity of the statements $\varphi$ 
under consideration. In particular, let $\Pi_n$-IMR be the $\Pi_n$-inner-model reflection principle, the version where the $\varphi$ are restricted to be $\Pi_n$. In private 
communication, Welch remarked that this is a proper hierarchy in the following strong sense: (if it exists) there is in $K$ an equivalence for each $n$ between $\Pi_n$-IMR and 
a (somewhat technical) statement concerning the length of the mouse order and admissibility. In turn, this can be used to show that there is a strictly increasing sequence of inner 
models $(W_n)_{n<\omega}$, with $W_n$ the least $L[E]$ model for which $\Pi_n$-INR holds (but $\Pi_{n+1}$-INR fails). 
\end{remark}

Since the core model can never satisfy the ground-model reflection principle, it is thus natural to ask whether any fine-structural extender model can. The following two theorems 
provide the precise consistency strength of the existence of such a model. The first direction builds on the methods of Fuchs-Schindler \cite{FuchsSchindler:MOIM}.

\begin{theorem}
\label{thm:PCW->GMR}
If $L[E]$ is a minimal iterable extender model with a proper class of Woodin cardinals, then it satisfies the ground-model reflection principle.
\end{theorem}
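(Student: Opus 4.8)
The goal is to show that a minimal iterable extender model $L[E]$ with a proper class of Woodin cardinals satisfies the ground-model reflection principle. Let me think about what this requires.

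First, I need to understand why this is plausible. The core model $K$ fails ground-model reflection precisely because it is rigid — definable in a generically absolute way, hence with no nontrivial grounds. So for $L[E]$ to satisfy ground-model reflection, it must fail to have this rigidity; in fact it must have *many* nontrivial grounds, enough to reflect every sentence. The presence of a proper class of Woodin cardinals is the key: Woodin cardinals are exactly what allow forcing to be "absorbed," and in the $L[E]$ setting they give rise to genericity iterations and to the failure of absoluteness that produces nontrivial grounds.

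Let me sketch the approach.
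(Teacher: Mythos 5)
Your proposal stops exactly where the proof has to begin: everything after ``Let me sketch the approach'' is missing, and what precedes it is only the (correct) heuristic that Woodin cardinals should defeat the rigidity that blocks ground-model reflection in $K$. The heuristic by itself does not produce a ground. To carry out this argument you need, at minimum, the following three ingredients, none of which appear in the proposal. First, given $\varphi(a)$ true in $L[E]$, you must actually \emph{manufacture} a nontrivial ground containing $a$. The paper does this by fixing a Woodin cardinal $\delta$ of $L[E]$ above the rank of $a$ and a cutpoint $\eta>\delta$, and running a genericity iteration using the $\delta$-generator version of Woodin's extender algebra against a class $\tilde E$ coding $E$, interleaved with maximal $\mathcal{P}$-constructions; the process terminates at a tree $\mathcal{T}$ of limit length for which $\mathcal{P}=\mathcal{P}(L[E],\mathcal{M}(\mathcal{T})+\omega,\delta(\mathcal{T}))$ is a proper class over which $L[E]$ is a set-generic extension. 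This is where the hypothesis of \emph{minimality} is doing real work: it is what guarantees that the $\mathcal{P}$-constructions along the way correctly detect whether the tree lives strictly below $\delta$, so that the construction can be steered inside $L[E]$ and terminates with a proper-class $\mathcal{P}$. Second, it is not automatic that this ground $\mathcal{P}$ satisfies $\varphi(a)$; $\mathcal{P}$ is not an elementary submodel of $L[E]$ in any obvious sense. The paper gets $\mathcal{P}\models\varphi(a)$ by passing to $V$, where $\mathcal{T}$ has a cofinal well-founded branch $b$ with $\mathcal{M}^{\mathcal{T}}_b$ iterable, and coiterating $\mathcal{P}$ with $\mathcal{M}^{\mathcal{T}}_b$ above $\eta^{+L[E]}$ to a common iterate $\mathcal{Q}$; since all the relevant embeddings fix $a$ (the critical points are above the rank of $a$, and the coiteration is above the cutpoint), elementarity transfers $\varphi(a)$ from $L[E]$ through $\mathcal{M}^{\mathcal{T}}_b$ and $\mathcal{Q}$ down to $\mathcal{P}$. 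Third, note that the naive analogue of the Cohen-real splitting trick from theorem \ref{Theorem.Cohen-real-forces-lightface-GMR} is unavailable here, since $L[E]$ is not presented to you as a forcing extension of anything; the whole content of the theorem is that the extender algebra lets you exhibit it as one, nontrivially, relative to any prescribed parameter. As it stands, the proposal is a statement of intent rather than a proof.
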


\begin{proof}
The argument uses some techniques from \cite{FuchsSchindler:MOIM}, adapted to the present context. One key ingredient is the ``$\delta$ generator version'' of Woodin's extender 
algebra, see Schindler-Steel \cite[lemma 1.3]{SchindlerSteel:SelfIterability}. If $\mathcal{M}$ is a normally $(\omega,\kappa^++1)$-iterable premouse and $\delta$ is a Woodin 
cardinal in $\mathcal{M}$, then this forcing notion $\P=\P^{\mathcal{M}|\delta}$ has the property that for every subset $A\subseteq\kappa^+$ there is a normal non-dropping iteration 
tree on $\mathcal{M}$ with a last model $\mathcal{N}$ such that if $\pi$ is the iteration embedding, then $A\cap\pi(\delta)$ is $\pi(\P)$-generic over $\mathcal{N}$.

The other key ingredient is the $\mathcal{P}$-construction of \cite{SchindlerSteel:SelfIterability}. If $\mathcal{M}$ is a premouse, $\delta$ is a cutpoint of $\mathcal{M}$ (that is, 
$\delta$ is not overlapped by any extender on the $\mathcal{M}$ sequence), $\bar{\mathcal{P}}$ is a premouse of height $\delta+\omega$, $\delta$ is a Woodin cardinal in 
$\bar{\mathcal{P}}$, $\bar{\mathcal{P}}|\delta$ is definable in $\mathcal{M}|\delta$, and $\bar{\mathcal{P}}[G]=\mathcal{M}|(\delta+1)$ for some $G$ which is generic over 
$\bar{\mathcal{P}}$ for its version of the $\delta$ generator version of Woodin's extender algebra, then $\mathcal{P}(\mathcal{M},\bar{\mathcal{P}},\delta)$ is the result of the 
maximal $\mathcal{P}$-construction over $\bar{\mathcal{P}}$ with respect to $\mathcal{M}$, above $\delta$. 

Essentially, this construction appends to the extender sequence of $\bar{\mathcal{P}}$ the restrictions of the extenders on the extender sequence of $\mathcal{M}$ that are indexed 
beyond $\delta$, as long as this results in a structure in which $\delta$ is still a Woodin cardinal. We will define $\mathcal{P}(\mathcal{M},\bar{\mathcal{P}},\delta)$ also in the case 
that $\delta$ is not a cutpoint of $\mathcal{M}$, by letting $\alpha$ be least such that $\alpha\ge\delta$, $E^{\mathcal{M}}_\alpha\neq\emptyset$ and 
$\kappa=\mathrm{crit}(E^{\mathcal{M}}_\alpha)\le\delta$, letting $\zeta\le\mathrm{ht}(\mathcal{M})$ be maximal such that $\alpha\le\zeta$ and
$\kappa^{+\mathcal{M}|\alpha}=\kappa^{+\mathcal{M}|\zeta}$, and setting
 $$ \mathcal{P}(\mathcal{M},\bar{\mathcal{P}},\delta) = \mathcal{P}(\mathrm{ult}_n(\mathcal{M}|| \zeta,E^{\mathcal{M}}_\alpha),\bar{\mathcal{P}},\delta), $$ 
where $n$ is least such that $\rho_{n+1}(\mathcal{M}||\zeta)\le\kappa$, if such an $n$ exists, and $n=0$ otherwise. See the discussion after the statement of 
\cite[lemma 3.21]{SchindlerSteel:SelfIterability} for details.

An iteration tree $\mathcal{T}$ on an extender model $W$ which is definable in $L[E]$ is said to be guided by $\mathcal{P}$-constructions in $L[E]$ if and only if the branches in 
$\mathcal{T}$ at limit stages are determined by $\mathcal{Q}$-structures that are pullbacks of $\mathcal{Q}$-structures obtained in $L[E]$ by maximal $\mathcal{P}$-constructions, 
see the discussion after \cite[definition 3.22]{FuchsSchindler:MOIM} for details.

We modify \cite[definition 3.25]{FuchsSchindler:MOIM} to say that an extender model $W$ definable in $L[E]$ is \emph{minimal} for $L[E]$ if and only if it has a proper class of 
Woodin cardinals, and for every $\delta$ that is Woodin in $W$, whenever $\mathcal{T}\in L[E]$ is a normal iteration tree on $W$ which is based on $W|\delta$ and is guided by 
$\mathcal{P}$-constructions in $L[E]$ and uses only extenders indexed above the supremum of the Woodin cardinals of $W$ below $\delta$, the following holds true: 
\begin{itemize}
\item
if $\mathcal{T}$ has limit length, then $\mathcal{T}$ lives strictly below $\delta$ if and only if 
 $$ \mathcal{P}(L[E],\mathcal{M}(\mathcal{T})+\omega,\delta(\mathcal{T})), $$ 
if defined, is not a proper class, and 
\item
if $\mathcal{T}$ has successor length and $[0,\infty]_{\mathcal{T}}$ does not drop, then 
 $$ \mathcal{P}(L[E],\mathcal{M}^\mathcal{T}_\infty|i^{\mathcal{T}}_{0,\infty}(\delta)+\omega,i^{\mathcal{T}}_{0,\infty}(\delta)), $$ 
if defined, is a proper class.
\end{itemize}
Here, as in \cite{FuchsSchindler:MOIM}, given a sound premouse $\mathcal R$, by $\mathcal R+\omega$ we denote the premouse end-extending $\mathcal R$ and obtained from 
$\mathcal R$ by constructing over it one step further.

It follows that $L[E]$ itself is minimal in this sense. For example, if $\delta$, $\mathcal{T}$ are as above, where the length of $\mathcal{T}$ is a limit ordinal and $\mathcal{T}$ lives 
strictly below $\delta$, then if $\mathcal{P}=\mathcal{P}(L[E],\mathcal{M}(\mathcal{T})+\omega,\delta(\mathcal{T}))$ were a proper class, it would be an iterable extender model 
with a proper class of Woodin cardinals that lies below $L[E]$ in the canonical pre-well-ordering of iterable extender models. The point is that $\mathcal{T}$ would be according to 
the iteration strategy of $L[E]$, and so there would be a cofinal well-founded branch such that $\mathcal{M}^{\mathcal{T}}_b$ is iterable. But since $\mathcal{T}$ lives strictly below 
$\delta$, it would follow that $\pi^\mathcal{T}_{0,b}(\delta)>\delta(\mathcal{T})$, and so $\delta(\mathcal{T})$ would not be Woodin in $\mathcal{M}^\mathcal{T}_b$, but it is Woodin in 
$\mathcal{P}$, which implies that $\mathcal{P}$ is below $L[E]$; see the proof of \cite[lemma 3.23]{FuchsSchindler:MOIM}, which shows that there is no $L[E]$-based sequence of 
length 2, in the terminology introduced there. If the length of $\mathcal{T}$ is a limit ordinal and $\mathcal{T}$ does not live strictly below $\delta$, then one can argue as in the proof 
of \cite[lemma 3.26]{FuchsSchindler:MOIM} to show that the relevant $\mathcal{P}$-construction yields a proper class model, and similarly in the case that $\mathcal{T}$ has 
successor length.

Now, to show that $L[E]$ satisfies the ground-model reflection principle, let $\varphi(a)$ be a statement true in $L[E]$. By assumption, there is a $\delta$ larger than the rank of $a$ 
that is Woodin in $L[E]$. Let $\eta>\delta$ be a cutpoint of the extender sequence $E$. Let $\tilde{E}$ code $E$ as a class of ordinals in some natural way. Form an iteration tree on 
$L[E]$ as follows (the construction is much as in the proof of \cite[lemma 3.29]{FuchsSchindler:MOIM}): first, hit $\eta$ many times some total extender on the $E$-sequence with 
critical point greater than the rank of $a$ and indexed below $\delta$ but above every Woodin cardinal less than $\delta$. After that, at successor stages, choose the least total 
extender in the current model with an index greater than the supremum of the Woodin cardinals below the current image $\delta'$ of $\delta$ that violates an axiom of the extender 
algebra with respect to $\tilde{E}\cap\delta'$. Since such extenders suffice to witness the Woodinness of $\delta$, one can work with the version of the extender algebra with this 
added restriction. If there is no such extender, or if a limit stage $\lambda$ is reached such that 
 $$ \mathcal{P}=\mathcal{P}(L[E],\mathcal{M}(\mathcal{T}\restrict\lambda)+\omega,\delta(\mathcal{M}(\mathcal{T}\restrict\lambda))) $$ 
is a proper class, then the construction is complete. Otherwise, as in the proof of \cite[lemma 3.29]{FuchsSchindler:MOIM}, it follows that $\mathcal{P}$ can serve as a 
$\mathcal{Q}$-structure, and the branch for $\mathcal{T}\restrict\lambda$ given by the iteration strategy in $V$ can be found inside $L[E]$, allowing us to extend the iteration tree in 
this case. Further, as in that proof, it follows that this process terminates at a limit stage $\lambda=\eta^{{+}L[E]}=\delta(\mathcal{T})$, and 
 $$ \mathcal{P}=\mathcal{P}(L[E],\mathcal{M}(\mathcal{T})+\omega,\delta(\mathcal{T})) $$ 
is a proper class, and hence a proper ground of $L[E]$. The tree $\mathcal{T}$ does not have a cofinal well-founded branch inside $L[E]$, but the model $\mathcal{M}(\mathcal{T})$ 
can be formed within $L[E]$.

Since $L[E]$ is iterable in $V$, it follows that $\mathcal{T}$ has a cofinal well-founded branch $b$ in $V$ such that $\mathcal{M}^\mathcal{T}_b$ is iterable in $V$. Since $\eta$ is a 
cutpoint of $E$, it follows that it is a cutpoint of $\mathcal{P}$, and moreover $\mathcal{P}$ is iterable above $\eta^{{+}L[E]}$. The coiteration of $\mathcal{P}$ and 
$\mathcal{M}^\mathcal{T}_b$ has to result in a common (proper class) iterate $\mathcal{Q}$. Let $\pi\!:L[E]\to\mathcal{M}^\mathcal{T}_b$, 
$\sigma\!:\mathcal{M}^\mathcal{T}_b\to\mathcal{Q}$ and $\tau\!:\mathcal{P}\to\mathcal{Q}$ be the iteration embeddings. Note that $a$ is not moved by $\pi$. Since the coiteration 
between $\mathcal{M}^\mathcal{T}_b$ and $\mathcal{P}$ is above $\eta^{{+}L[E]}$, $a$ is not moved by $\sigma$ or $\tau$ either. Hence, we get:
 $$ L[E]\models\varphi(a) \Longleftrightarrow \mathcal{Q}\models\varphi(\sigma(\pi(a))) \Longleftrightarrow \mathcal{Q}\models\varphi(\tau(a)) \Longleftrightarrow 
 \mathcal{P}\models\varphi(a). $$
Thus, $\mathcal{P}$ is a non-trivial ground of $L[E]$ which reflects the truth of $\varphi(a)$, as desired.
\end{proof}

It was noted earlier that the ground axiom, stating that there is no proper ground, implies the failure of the ground-model reflection principle. There is a natural way of relativizing the 
ground axiom to an arbitrary set $a$, thus weakening it: let us say that the ground axiom holds \emph{relative to $a$} if and only if there is no proper ground containing $a$. Thus, 
the usual ground axiom is the ground axiom relative to $\emptyset$. Clearly, if there is an $a$ such that the ground axiom holds relative to $a$, then ground-model reflection fails, 
since there is then no non-trivial ground reflecting the statement ``$a=a$''. The following theorem shows that a sufficiently iterable $L[E]$ model that is below a proper class of 
Woodin cardinals satisfies the ground axiom relative to some set, and hence fails to satisfy the ground-model reflection principle.

\begin{theorem} \label{thm:NPCW->NGMR}
Let $L[E]$ be an extender model that is fully iterable in every set generic extension of $V$ and that is below a proper class of Woodin cardinals, in the sense that its Woodin cardinals 
are bounded and no initial segment of $L[E]$ is a sharp for an inner model with a proper class of Woodin cardinals. Under these assumptions, $L[E]$ satisfies the ground axiom 
relative to some set $x\in L[E]$.
\end{theorem}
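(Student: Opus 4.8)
The plan is to exhibit a single set $x\in L[E]$ coding a long-enough initial segment of $L[E]$, and then to argue that any ground containing $x$ must recompute the rest of the extender sequence and hence coincide with $L[E]$. Let $\delta$ be the supremum of the Woodin cardinals of $L[E]$, which is an ordinal by the boundedness hypothesis, and fix a cutpoint $\eta>\delta$ of the extender sequence $E$. Let $x\in L[E]$ be a set of ordinals coding the premouse $L[E]|\eta$ in some fixed, absolutely decodable fashion. I claim that $L[E]$ satisfies the ground axiom relative to $x$, which by the discussion preceding the theorem suffices, since it then refutes the ground-model reflection principle already for the trivial instance $x=x$.

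Suppose toward a contradiction that $W\ofneq L[E]$ is a proper ground with $x\in W$, so that $L[E]=W[G]$ for some $W$-generic $G$ on a poset $\P\in W$. Since $x\in W$ and its decoding is absolute, $W$ contains $L[E]|\eta$ and agrees with $L[E]$ below $\eta$. The heart of the matter is to show that the tail $E\restrict[\eta,\Ord)$ already belongs to $W$: granting this, $W$ contains $L[E]|\eta$ together with every extender of $E$ indexed at or above $\eta$, so $W$ computes the whole extender model $L[E]$, whence $L[E]\of W$, contradicting $W\ofneq L[E]$.

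To recover the tail inside $W$, I would use that $\eta$ lies above every Woodin cardinal of $L[E]$ and that no initial segment of $L[E]$ is a sharp for an inner model with a proper class of Woodin cardinals. Under these hypotheses the part of $L[E]$ above the cutpoint $\eta$ coincides with the core model computed over the base $L[E]|\eta$: since there are no Woodins past $\eta$ and no relevant sharps, the $\mathcal{Q}$-structures needed to guide the relevant iterations are available as maximal $\mathcal{P}$-constructions in the sense of Schindler-Steel \cite{SchindlerSteel:SelfIterability} and Fuchs-Schindler \cite{FuchsSchindler:MOIM}, so that $E\restrict[\eta,\Ord)$ is definable over $L[E]|\eta$ by a fully backgrounded construction. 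The crucial feature is that this core-model computation is invariant under set forcing, uniformly in the size of $\P$: both $W$ and its set-generic extension $L[E]=W[G]$ compute the same object over $L[E]|\eta$. Here the hypothesis that $L[E]$ is fully iterable in every set-generic extension of $V$ is exactly what licenses the comparison arguments underlying this generic absoluteness. Since the computation performed in $L[E]=W[G]$ returns $E\restrict[\eta,\Ord)$ — as $L[E]$ is its own core model above the cutpoint $\eta$ — the computation performed in $W$ returns the same class, so $E\restrict[\eta,\Ord)\in W$, as needed.

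I expect the main obstacle to be precisely this last step: verifying that the tail of the extender sequence above the last Woodin cardinal is recoverable by a construction internal to the ground $W$ and that this construction yields \emph{exactly} $E\restrict[\eta,\Ord)$, neither overshooting nor falling short. This requires the generic absoluteness of the relevant $\mathcal{P}$-guided core-model construction, for which the no-sharp hypothesis is essential: it prevents the reconstruction from running up against a proper-class model with a proper class of Woodin cardinals, which is exactly the phenomenon that was exploited to manufacture proper grounds in theorem \ref{thm:PCW->GMR}. The remaining care lies in the comparison of the model built inside $W$ against $L[E]$, carried out in a common set-generic extension where full iterability is available, to conclude that the two agree above $\eta$; once this is in place, the argument is uniform in $\Card{\P}$ and the desired contradiction follows.
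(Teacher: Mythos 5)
Your top-level strategy matches the paper's: fix a cutpoint of $E$ above the supremum of the Woodin cardinals, let $x$ be the corresponding initial segment of $L[E]$, and show that any ground containing $x$ must already contain the tail of the extender sequence. But the step you yourself flag as the main obstacle --- recovering $E\restrict[\eta,\Ord)$ inside an arbitrary ground $W$ --- is precisely where the content of the theorem lies, and the mechanism you propose for it does not work as stated. You assert that $L[E]$ above the cutpoint ``coincides with the core model computed over the base $L[E]|\eta$'' via a fully backgrounded, generically absolute construction. There is no such theorem available under the stated hypotheses: generic absoluteness of the core model is proved under strong anti-large-cardinal assumptions (such as the nonexistence of an inner model with a Woodin cardinal over the base), whereas here $L[E]$ may have Woodin cardinals below $\eta$ and substantial large-cardinal structure above $\eta$, short only of an initial segment that is a sharp for a proper class of Woodins. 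More fundamentally, even where a core model over $L[E]|\eta$ can be defined, a general extender model satisfying the hypotheses need not equal it above $\eta$: nothing makes $L[E]$ maximal or universal over its initial segment, and a backgrounded construction carried out inside the unknown ground $W$ (using which background extenders?) has no reason to output exactly $E\restrict[\eta,\Ord)$, neither more nor less. So the claim that ``$L[E]$ is its own core model above the cutpoint'' is an unproved assumption doing all the work.

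The paper closes this gap by a different route. From \cite[theorem 0.2]{SchindlerSteel:SelfIterability} one obtains a cutpoint $\beta$ above the Woodin cardinals such that $L[E]$ --- and, using the hypothesis of full iterability in all set-generic extensions, every set-generic extension of $L[E]$ --- carries an iteration strategy for trees on $L[E]$ living above $\beta$; combined with Schlutzenberg's definability of $E$ from $E\restrict\beta$ inside self-iterable mice \cite[theorem 4.3]{Schlutzenberg:DefinabilityOfEinSImice} and the definability of grounds, $L[E]$ becomes definable in a collapse extension $W[h']$ from a name $\tau\in W$ and a condition $p$. One then simultaneously coiterates all the finite-variant interpretations $L[E_q]$ of this definition to a common iterate $L[F]$, which by a homogeneity argument is definable in $W$ itself; finally, the thick-hull argument of \cite[theorem 2.14]{SargsyanSchindler:VarsovianModels} realizes $L[E]$ as the transitive collapse of the hull of $\Gamma\cup L[E]|\beta$ in $L[F]$ for a thick class $\Gamma$ of fixed points definable in $W$, whence $L[E]\subseteq W$ and $W=L[E]$. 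Some argument of this kind --- producing a definable-in-$W$ surrogate for $L[E]$ and then extracting $L[E]$ from it --- is needed; an appeal to generic absoluteness of a core-model construction over the base does not substitute for it.
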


\begin{proof}
Let $L[E]$ be as described, and let $\alpha$ be the supremum of its Woodin cardinals. By \cite[theorem 0.2]{SchindlerSteel:SelfIterability}, there is a $\beta\ge\alpha$ which is a 
cutpoint of $L[E]$ and such that $L[E]$ has an iteration strategy for iteration trees on $L[E]$ that only use extenders whose critical points are above $\beta$. By our assumption 
that $L[E]$ is fully iterable in every set-generic forcing extension of $V$, the argument of the proof of \cite[theorem 0.2]{SchindlerSteel:SelfIterability} generalizes to show that there 
is a $\beta$ as above such that every set-generic forcing extension of $L[E]$ has an iteration strategy for iteration trees on $L[E]$ which only use extenders with critical points 
above $\beta$. We claim that $L[E]$ satisfies the ground axiom relative to $x=L[E]|\beta$.

To see this, let $W$ be a ground of $L[E]$ with $x\in W$. We must show that $W$ is the trivial ground, that is, that $W=L[E]$. Let $\P\in W$ be a notion of forcing, and let 
$g\subseteq\P$ be generic over $W$ such that $W[g]=L[E]$. Let $\gamma$ be the cardinality of $\P$ in $L[E]$, and let $h$ be $\mathrm{Col}(\omega,\gamma)$-generic over 
$V$. By the absorption property of the collapse, there is then an $h'\subseteq\mathrm{Col}(\omega,\gamma)$ generic over $L[E]$ such that $L[E][h]=W[g][h]=W[h']$. It follows 
that $L[E]$ is a definable class in $W[h']$ and is fully iterable there with respect to iteration trees that live above $\beta$. In fact, $E$ (and hence $L[E]$) is definable in 
$L[E][h]=W[h']$ by a formula $\varphi$ using a parameter $z\in L[E]$. This is because $L[E]$ is a ground of $W[h']$, and is hence definable in $W[h']$ using a parameter from 
$L[E]$ (namely the power set $\mathcal{P}(\mathrm{Col}(\omega,\gamma))^{L[E]}$; see the discussion of the ground-model reflection principle in the introduction, and 
\cite[theorem 5]{FuchsHamkinsReitz2015:Set-theoreticGeology}). 

Further, $E$ is definable inside $L[E]$, using the argument of the proof of Schlutzenberg \cite[theorem 4.3]{Schlutzenberg:DefinabilityOfEinSImice}---since we only have 
self-iterability above $\beta$, we start the inductive definition with $E\restrict\beta$, and get a definition of $E$ using $E\restrict\beta$ as a parameter. Combining these two 
parameters $\mathcal{P}(\mathrm{Col}(\omega,\gamma))^{L[E]}$ and $E\restrict\beta$ we obtain the parameter $z$ indicated above. 

Now, $z=\tau^{h'}$ for some $\mathrm{Col}(\omega,\gamma)$-name $\tau\in W$. Let $p\in h'$ be a condition that forces over $W$ that $\varphi(-,\tau)$ defines a universal 
extender model that is fully iterable above the cutpoint $\beta$ and that agrees with $L[E]$ up to $\beta$. Inside $W[h']$, for any $q\in\mathrm{Col}(\omega,\gamma)$ extending 
$p$, let $h'_q$ be the finite variant of $h'$ compatible with $q$, that is, viewing $h'$ as a function from $\omega$ to $\gamma$, 
$h'_q=q\cup h'\restrict(\omega\smallsetminus\dom(q))$. Still working inside $W[h']$, for $q$ as above, let $L[E_q]$ be the inner model defined by $\varphi(-,\tau^{h'_q})$. 

All these models $L[E_q]$ are universal, iterable above $\beta$, coincide up to $\beta$, and $\beta$ is a cutpoint for each of them. Thus, they can all can be simultaneously 
coiterated inside $W[h']$, yielding a common iterate, which we denote $L[F]$. This is a definable extender model in $W[h']$. But note that $L[F]$ is definable in $W$ from $p$ 
and $\tau$, since its definition does not depend on $h'$, but only on the collection of all finite variants of $h'$. In more detail, view $F$ as a class of ordinals (this is no problem, 
as $L[F]$ has a canonical well-order). We claim that $F$ is definable in $W$ as the class of ordinals $\xi$ such that $p$ forces ``$\xi\in F$'' (note that this statement uses the 
parameters $p$ and $\tau$). To see this, let $h''$ be $\mathrm{Col}(\omega,\gamma)$-generic over $W$ with $p\in h''$, let $\xi$ be given, and suppose that $\xi\in F=F^{W[h']}$, 
say. If $\xi\notin F^{W[h'']}$, then there is a condition $q\in h''$ that forces ``$\xi\notin F$''. Since $p\in h''$, we may assume that $q\le p$. But then, since $q\in h'_q$, it follows that 
$\xi\notin F^{W[h'_q]}$, which is absurd since $W[h'_q]=W[h']$, as $h'_q$ is just a finite variant of $h'$.

One can now argue as in the proof of Sargsyan-Schindler \cite[theorem 2.14]{SargsyanSchindler:VarsovianModels}: in $W[h']$, the iteration embedding $j\!:L[E]\to L[F]$ is a 
definable class, and since $W[h']$ is a set-generic forcing extension of $W$, it follows that there is a thick class $\Gamma$ definable in $W$ and consisting of ordinal fixed points 
of $j$ such that $L[E]$ is isomorphic to the hull in $L[F]$ of $\Gamma\cup L[E]|\beta$. Since $\Gamma$ and $L[F]$ are definable in $W$, so is this hull. Hence, $L[E]$, the 
Mostowski collapse of this hull, is a class in $W$, and therefore $L[E]\subseteq W$. Thus, $W=L[E]$, as was to be shown.
\end{proof}

Thus, the consistency strength of the ground-model reflection principle, which is the same as that of \ZFC, increases dramatically if it is relativized to a sufficiently iterable $L[E]$ 
model. This is in line with our earlier results that the core model $K$ cannot satisfy this principle, and supports the view that ground-model reflection is in a sense an ``anti-canonical 
inner model'' statement. There are several well-known instances of this phenomenon: for example, the statement $(A)$ that every projective set of reals is Lebesgue-measurable 
has consistency strength an inaccessible cardinal, but does not hold in a canonical inner model at a low level, where there are easily definable well-orderings of the reals. In fact, if 
$(A)$ holds in an iterable $L[E]$ model $M$, then for every $n<\omega$ and every real $x$ in $M$, $M_n^\sharp(x)$ must be in $M$. Another example is the statement $(B)$ that 
there is no definable well-ordering of the reals in $L(\mathbb{R})$. The consistency strength of $(B)$ is just that of $\ZFC$, but by a result of Steel, the least iterable $L[E]$ model 
that satisfies $(B)$ must be above $M_\omega$ (the least $L[E]$ model with infinitely many Woodin cardinals) in the mouse order, see the remark following the statement of 
\cite[theorem 0.2]{SchindlerSteel:SelfIterability}.

\section{The maximality principle and the inner model hypothesis} \label{sec:the}

Consider next the \emph{maximality principle} of \cite{StaviVaananen2001:ReflectionPrinciples} and \cite{Hamkins2003:MaximalityPrinciple}, which asserts that whenever a
statement is forceably necessary, which is to say that it is forceable in such a way that it remains true in all further extensions, then it is already true. This is expressible in modal
terms by the schema $\possible\necessary\varphi\to\varphi$, the principal axiom of the modal theory \theoryf{S5}, where the modal operators are interpreted so that $\possible\psi$
means that $\psi$ is true in some set-forcing extension and $\necessary\psi$ means that $\psi$ is true in all set-forcing extensions.

\begin{theorem}\label{Theorem.MP-implies-lightface-ground-model-reflection}
 The maximality principle implies the lightface ground-model reflection principle.
\end{theorem}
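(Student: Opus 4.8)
The plan is to apply the maximality principle to a single, carefully chosen first-order sentence that encodes the desired instance of ground-model reflection. Fix a sentence $\sigma$ true in $V$, and work under \MP. Using the ground-model enumeration theorem discussed in the introduction, let $\psi$ be the first-order sentence asserting ``there is an index $r$ with $W_r\ofneq V$ and $W_r\satisfies\sigma$'', that is, that the universe has a non-trivial ground satisfying $\sigma$. The lightface ground-model reflection instance for $\sigma$ is precisely the assertion that $\psi$ holds, so it suffices to derive $\psi$ from \MP. Since \MP\ is the schema $\possible\necessary\psi\to\psi$, I would aim to establish $\possible\necessary\psi$, i.e. that $\psi$ can be forced to become necessary.

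First I would force to add a Cohen real $c$, passing to $V[c]$, and argue that $\necessary\psi$ holds there, i.e. that $\psi$ is true in every set-forcing extension of $V[c]$. Every such extension has the form $V[c][H]$ for some generic $H$, and $V[c][H]$ is a set-forcing extension of $V$ via the two-step iteration $\Add(\omega,1)\ast\dot\Q$; hence $V$ is a ground of $V[c][H]$. This ground is non-trivial, since $c\notin V$ gives $V\ofneq V[c][H]$, and by hypothesis $V\satisfies\sigma$. Thus $V$ witnesses $\psi$ inside $V[c][H]$, and as $H$ was arbitrary (including trivial), $\psi$ holds throughout all extensions of $V[c]$, giving $V[c]\satisfies\necessary\psi$ and hence $V\satisfies\possible\necessary\psi$. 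Applying \MP\ then yields $V\satisfies\psi$, which is exactly a non-trivial ground of $V$ modeling $\sigma$, as required.

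The crux of the argument is not any hard forcing computation but the choice of $\psi$ together with the observation that the Cohen extension $V[c]$ permanently installs $V$ itself as a non-trivial $\sigma$-satisfying ground of every subsequent extension; this is what makes $\psi$ necessary over $V[c]$. The two points requiring care are that $\psi$ is genuinely first-order expressible, which rests on the uniform definability of grounds furnished by the enumeration theorem, and that the non-triviality of $V$ as a ground persists in all extensions of $V[c]$, which holds because the added Cohen real can never be removed. Notably, no preservation of \MP\ into the extension is needed, since \MP\ is invoked only in $V$.
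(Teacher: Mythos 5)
Your proposal is correct and follows essentially the same route as the paper: you reflect the same supplementary sentence ``there is a non-trivial ground satisfying $\sigma$,'' observe that it becomes necessary after any non-trivial forcing (you specialize to a Cohen real, the paper notes any non-trivial extension works), and then apply \MP\ in $V$. No issues.
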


\begin{proof}
Suppose that a sentence $\sigma$ is true in $V$. Consider the statement, ``$\sigma$ is true in some non-trivial ground.'' In light of the ground-model enumeration theorem, this
supplementary statement is expressible in the first-order language of set theory. Furthermore, it becomes true in any non-trivial forcing extension $V[G]$, since $V$ is a non-trivial
ground of $V[G]$, and the statement remains true in any further forcing extension. Thus, the supplementary statement is forceably necessary in $V$, and therefore by the
maximality principle it must already be true in $V$. So there must be a non-trivial ground model $W\ofneq V$ in which $\sigma$ is true.
\end{proof}

The same argument works with the various other versions of the maximality principle, such as $\MP_\Gamma(X)$, where only forcing notions in a class $\Gamma$ are considered
and statements with parameters from $X$. The same argument as in theorem \ref{Theorem.MP-implies-lightface-ground-model-reflection} shows that $\MP_\Gamma(X)$ implies
the $\Gamma$-ground model reflection principle with parameters from $X$.

A similar argument can be made from the \emph{inner-model hypothesis} \IMH, which is the schema of assertions made for each sentence $\sigma$, that if there is an outer model
with an inner model of $\sigma$, then there is already an inner model of $\sigma$ without first moving to the outer model. This principle also can be described in modal vocabulary
as the schema of assertions $\uppossible\downpossible\sigma\implies\downpossible\sigma$, where the up-modality $\uppossible$ refers here to possibility in outer models and
the down-modality $\downpossible$ refers to possibility in inner models. See \cite{Friedman2006:InternalConsistencyAndIMH} for details about \IMH; the axiom is naturally
formalized in a multiverse context of possible outer models, although Antos-Barton-Friedman \cite{AntosBartonFriedman:Universism-and-extensions-of-V} shows that modified 
versions of the axiom are expressible in the second-order language of set theory in models of a variant of $\GBC+\Sigma^1_1$-comprehension, without direct reference to outer 
models. In formulating \IMH, one may equivalently insist on proper inner models.

\begin{theorem}
 The inner-model hypothesis implies the lightface inner-model reflection principle.
\end{theorem}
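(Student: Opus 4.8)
The plan is to mirror the proof of theorem \ref{Theorem.MP-implies-lightface-ground-model-reflection}, but invoking \IMH\ in place of the maximality principle and exploiting the proper-inner-model formulation of \IMH\ recorded just above. Suppose $\sigma$ is a sentence true in $V$; the goal is to produce a proper inner model $W\ofneq V$ with $W\satisfies\sigma$, which is exactly the relevant instance of the lightface inner-model reflection principle.

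First I would exhibit an outer model witnessing the modal hypothesis $\uppossible\downpossible\sigma$. For this it suffices to pass to any nontrivial forcing extension, say $V[c]$ obtained by adding a single Cohen real $c$. Then $V$ sits as a proper inner model of $V[c]$: it is a transitive class containing all the ordinals, it is properly contained in $V[c]$ since $c\notin V$, and it is moreover a definable inner model of $V[c]$ by ground-model definability. Since $V\satisfies\sigma$ by assumption, the outer model $V[c]$ satisfies ``there is a proper inner model of $\sigma$,'' which is precisely the assertion that $\downpossible\sigma$ holds in $V[c]$. Hence $\uppossible\downpossible\sigma$ holds, the forcing extension $V[c]$ serving as the required outer model.

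Now I would simply apply \IMH, in its proper-inner-model form $\uppossible\downpossible\sigma\implies\downpossible\sigma$, to conclude that $\downpossible\sigma$ already holds: there is a proper inner model $W\ofneq V$ with $W\satisfies\sigma$, obtained without moving to the outer model $V[c]$. Since $\sigma$ was an arbitrary sentence true in $V$, the entire lightface schema follows.

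The only point requiring care is the bookkeeping of what ``proper'' modifies at each stage: in verifying the hypothesis it refers to $V\ofneq V[c]$ inside the outer model, whereas in the conclusion it refers to $W\ofneq V$. This mismatch is exactly what the proper-inner-model reformulation of \IMH\ is designed to absorb, so no genuine obstacle arises. The entire content beyond the definition of \IMH\ is the triviality that $V$ occurs as a proper inner model of one of its own nontrivial forcing extensions, and this is what drives the argument.
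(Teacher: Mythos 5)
Your proof is correct and follows essentially the same route as the paper's: pass to any non-trivial (e.g.\ Cohen) forcing extension, observe that $V$ itself is a proper inner model of that outer model satisfying $\sigma$, and then apply the proper-inner-model form of \IMH\ to obtain a proper inner model of $V$ satisfying $\sigma$. The paper's version is just a terser rendering of the same argument, without singling out the Cohen extension.
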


\begin{proof}
If $\sigma$ is true in $V$ then, in any non-trivial extension of $V$, there is a proper inner model in which $\sigma$ holds, namely $V$ itself. So if the inner-model hypothesis holds,
then there must already be such an inner model of $V$, and so the lightface inner-model reflection principle holds.
\end{proof}

Let us now consider the downward-directed version of the maximality principle studied in Hamkins-L\"owe \cite{HamkinsLoewe2013:MovingUpAndDownInTheGenericMultiverse}, 
which can be viewed itself as a kind of reflection principle. Namely, let us say that the \emph{ground-model maximality principle} holds if and only if any statement $\sigma$ that 
holds in some ground model and all grounds of that ground, is true in $V$. This is expressible as $\downpossible\downnecessary\sigma\implies\sigma$, where $\downpossible$ 
and $\downnecessary$ are the modal operators of ``true in some ground model'' and ``true in all ground models,'' respectively.\footnote{In light of the downward orientation of 
this axiom, however, the `maximality' terminology may be distracting, as any deeper ground, for example, will also satisfy the ground-model maximality principle. What is being 
maximized here is not the model, but the collection of truths that are downward-necessary. The principle is related to S5 for grounds, as in 
\cite{HamkinsLoewe2013:MovingUpAndDownInTheGenericMultiverse}, since the axiom $\possible\necessary\varphi\to\varphi$ is the defining axiom of S5 over S4; but the
ground-model maximality principle is not identical to the validity of S5, as the principle is (trivially) true under the ground axiom, whereas the modal logic of grounds in this situation
strictly exceeds S5.}

By considering $\neg\sigma$ and the contrapositive, the ground-model maximality principle is easily seen to be equivalent to the following assertion: if a sentence $\sigma$ is true,
then every ground model has itself a ground in which $\sigma$ is true. This formulation of the maximality principle reveals it to be a particularly strong reflection principle, when
combined with the assertion that indeed there are non-trivial grounds.

\begin{theorem}\label{Theorem.Correct-cardinal-implies-ground-with-ground-model-MP}
If $V_\delta\elesub V$, then $V$ has a ground model $W$ that satisfies the ground-model maximality principle for assertions allowing parameters of rank less than $\delta$ in $W$.
\end{theorem}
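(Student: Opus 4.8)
The plan is to exploit the hypothesis $V_\delta\elesub V$ to transfer $\delta$-correctness to the low-index grounds of $V$, and thereby reduce the entire problem to the ground-geology of the set model $V_\delta$. The key fact I would prove first is a transfer lemma: because the ground enumeration $W_r$ is uniformly first-order definable and $V_\delta\elesub V$, every ground $W_r$ whose index satisfies $r\in V_\delta$ is automatically $\delta$-correct, in the sense that $V_\delta^{W_r}\elesub W_r$, and moreover its low part $V_\delta^{W_r}=W_r\cap V_\delta=(W_r)^{V_\delta}$ is exactly a ground of $V_\delta$ in the geology of $V_\delta$. This is a short elementarity chase: for $a\in V_\delta^{W_r}$ one has
$$ W_r\satisfies\varphi(a)\iff V\satisfies\varphi^{W_r}(a)\iff V_\delta\satisfies\varphi^{W_r}(a)\iff V_\delta^{W_r}\satisfies\varphi(a), $$
where the middle equivalence is $V_\delta\elesub V$ applied with the parameters $r,a\in V_\delta$, and the last uses the absoluteness $(W_r)^{V_\delta}=W_r\cap V_\delta$. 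Thus the $\delta$-correct grounds of $V$ contain a faithful copy of the whole ground-geology of $V_\delta$, with correctness holding simultaneously in all of them.

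Granting this, I would next locate a ground $w$ of the \emph{set} model $V_\delta$ that itself satisfies the ground-model maximality principle (with arbitrary parameters from $w$), and then set $W:=W_{r_0}$ for an index $r_0\in V_\delta$ with $(W_{r_0})^{V_\delta}=w$. By the transfer lemma, $V_\delta^{W}=w\elesub W$, so the parameters of rank $<\delta$ in $W$ are precisely the elements of $w$. The verification of ground-model maximality in $W$ then runs as follows: if $\sigma(a)$ holds in $W$ with $a\in w$, correctness gives $w\satisfies\sigma(a)$; the maximality principle inside $w$ supplies, below every ground of $w$, a deeper ground of $w$ satisfying $\sigma(a)$; and, applying the transfer lemma now \emph{internally} to $W$ (legitimate since $V_\delta^W=w\elesub W$), these lift to $\delta$-correct grounds of $W$ witnessing the required downward reflection of $\sigma(a)$.

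The two places where genuine work is needed, and where I expect the main obstacles to lie, are (a) producing the ground $w$ of $V_\delta$ satisfying ground-model maximality, and (b) handling those grounds of $W$ that are \emph{not} $\delta$-correct—the large-forcing grounds, which can retain the parameter $a$ yet fail to reflect the truth of $\sigma(a)$ since correctness is in general destroyed by forcing. For (a) the natural route is to iterate the mantle inside the set model, $\Mantle_0=V_\delta\fo\Mantle_1\fo\cdots$; since each $\Mantle_\xi$ is a subset of the fixed set $V_\delta$, a strictly descending chain cannot continue through all the ordinals, so the iteration stabilizes at a $w$ with $\Mantle^{w}=w$, that is, $w\satisfies\GA$, whence $w$ satisfies ground-model maximality trivially. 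The crux of (a) is to ensure that this stabilized $w$ is a \emph{bona fide} ground of $V_\delta$, i.e.\ an $(W_r)^{V_\delta}$ with $r\in V_\delta$, rather than merely a definable inner model; when $V_\delta$ possesses a bedrock this is immediate (the bedrock satisfies $\GA$), and in general I would try to reduce to that case by first descending, via Usuba's downward-directed-grounds theorem, to a sufficiently deep correct ground before iterating. For (b) the plan is to choose $W$ deep enough that every large-forcing ground of $W$ which retains $a$ still lies above a $\delta$-correct ground in the copied $V_\delta$-geology, again amalgamating with Usuba's theorem; controlling these incorrect grounds is, I expect, the most delicate part of the argument.
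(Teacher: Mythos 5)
Your overall strategy is genuinely different from the paper's, and it has two gaps, the first of which is fatal to the plan as stated. In step (a) you aim to produce a ground $w$ of $V_\delta$ with $w\satisfies\GA$; by your own transfer lemma this would lift to a ground $W\ofneq V$ with $V_\delta^W\elesub W$, hence $W\satisfies\GA$, i.e.\ a \emph{bedrock} of $V$. But $V$ need not have a bedrock, and the theorem is intended to apply (and is applied, in the corollary immediately following it) precisely in the bottomless case, where no ground satisfies \GA. The iterated-mantle construction does not rescue this: the stabilized model is a definable inner model but there is no reason for it to be a ground, and your proposed repair---first descending to a deep ground via Usuba and then iterating---changes nothing, because by the \DDG\ every ground of a ground is a ground and every ground of $V$ contains a ground of $W$, so $\Mantle^W=\Mantle^V$ for every ground $W$; the (iterated) mantle is invariant under passing to grounds. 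Step (b) is the second gap: verifying ground-model maximality in $W$ requires handling \emph{every} ground $U$ of $W$, including those obtained by forcing of size $\geq\delta$, which are invisible to the geology of $V_\delta^W$; this is exactly where the real work lies, and ``choose $W$ deep enough'' does not obviously control them, since below any fixed $W$ there remain arbitrarily large-forcing grounds.

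For comparison, the paper's proof needs neither a correct ground nor any analysis of the geology of $V_\delta$. By Usuba's strong \DDG\ there is a single ground $W$ with $W\of W_r$ for every $r\in V_\delta$. If $W\satisfies\downpossible\downnecessary\varphi(a)$ with $a\in V_\delta\intersect W$, the witnessing ground $U$ of $W$ is a ground of $V$, so $U=W_r$ for some $r$, and the existence of an index $r$ with $W_r\satisfies\downnecessary\varphi(a)$ is a first-order statement with parameter $a\in V_\delta$; by $V_\delta\elesub V$ such an $r$ exists in $V_\delta$. Then $W\of W_r$, so by the intermediate-model theorem $W$ is itself a ground of $W_r$, and $\downnecessary\varphi(a)$ in $W_r$ gives $\varphi(a)$ in $W$ directly. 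I would suggest abandoning the reduction to the set model $V_\delta$ and the \GA\ route, and instead using elementarity only to pull the index of the witnessing ground down into $V_\delta$, as above.
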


\begin{proof}
The proof uses the recent result of Usuba \cite{Usuba:The-downward-directed-grounds-hypothesis-and-very-large-cardinals} showing that the strong downward-directed grounds 
hypothesis (\DDG) holds; that is, for any set $I$, there is a ground $W$ contained in $\Intersect_{r\in I}W_r$, where $W_r$ denotes the $r^{th}$ ground as in the statement of the 
ground-model enumeration theorem.

Assume $V_\delta\elesub V$. By the strong \DDG, there is a ground $W$ with $W\of W_r$ for all $r\in V_\delta$. Suppose that $W\satisfies\downpossible\downnecessary\varphi(a)$
for some $a\in V_\delta\intersect W$. So $W$ has a ground model $U$ such that $U\satisfies\downnecessary\varphi(a)$; that is, $U$ satisfies $\varphi(a)$ and so does every
ground of $U$. Since $U$ is a ground of $W$, which is a ground of $V$, it follows that $U$ is a ground of $V$ and consequently $U=W_r$ for some $r$ by the ground-model
enumeration theorem. The least rank of an $r$ whose corresponding ground $W_r$ has the properties of $U$ that we have mentioned is definable in $V$ using $a$ as a parameter,
and consequently there is such an $r$ already in $V_\delta$. In this case, $W\of U=W_r$ by the assumption on $W$. Since $W\of U\of V$, where $W$ is a ground of $V$, it follows
from the intermediate-model theorem (see \cite[fact 11]{FuchsHamkinsReitz2015:Set-theoreticGeology} or Jech \cite[corollary 15.43]{Jech:SetTheory3rdEdition}) that $W$ is also a
ground of $U$ where $\downnecessary\varphi(a)$ holds, and so $\varphi(a)$ is true in $W$, as desired.
\end{proof}

The previous argument, using the strong \DDG, provides a more direct method than \cite[theorem 8]{HamkinsLoewe2013:MovingUpAndDownInTheGenericMultiverse} of
finding a model of the ground-model maximality principle.

\begin{corollary}
If $V_\delta\elesub V$ and $V$ has no bedrock, that is, no minimal ground, then there is a ground model $W$ satisfying the ground-model maximality principle and the
ground-model reflection principle for assertions using parameters of rank less than $\delta$.
\end{corollary}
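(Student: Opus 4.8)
The plan is to show that the very ground $W$ produced by theorem~\ref{Theorem.Correct-cardinal-implies-ground-with-ground-model-MP} already witnesses the corollary, so that no new construction is required. Applied under the hypothesis $V_\delta\elesub V$, that theorem yields a ground $W\ofneq V$ satisfying the ground-model maximality principle for assertions with parameters of rank less than $\delta$; the maximality half of the corollary is thus immediate. It therefore remains only to verify two things about this same $W$: that it has no bedrock, and that, in the presence of the ground-model maximality principle, the absence of a bedrock already yields the ground-model reflection principle for parameters of rank less than $\delta$.

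For the first point, I would show that $W$ inherits the no-bedrock property from $V$. Suppose toward a contradiction that $M$ is a minimal ground of $W$. Since a ground of a ground is a ground, $M$ is a ground of $V$, and as $V$ has no bedrock, $M$ is not minimal there, so there is a ground $M'\ofneq M$ of $V$. Then $M'\of M\of V$ with $M'$ a ground of $V$, so by the intermediate-model theorem (\cite[fact 11]{FuchsHamkinsReitz2015:Set-theoreticGeology} or Jech \cite[corollary 15.43]{Jech:SetTheory3rdEdition}) $M'$ is a ground of $M$, hence again a ground of $W$, properly below $M$, contradicting the minimality of $M$ in $W$. Thus $W$ has no minimal ground.

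For the second point, recall the equivalent, contrapositive formulation of the ground-model maximality principle noted before theorem~\ref{Theorem.Correct-cardinal-implies-ground-with-ground-model-MP}: if $\varphi(a)$ holds in $W$ then every ground of $W$ containing $a$ has itself a ground, containing $a$, in which $\varphi(a)$ holds (in modal terms, $\varphi(a)$ yields $\downnecessary\downpossible\varphi(a)$). Suppose then that $\varphi(a)$ holds in $W$ with $a$ of rank less than $\delta$. If I can produce a proper ground $W_0\ofneq W$ with $a\in W_0$, then applying this formulation to $U=W_0$ gives a ground $U'$ of $W_0$ with $a\in U'$ and $U'\satisfies\varphi(a)$; since $U'\of W_0\ofneq W$ and a ground of a ground is a ground, $U'$ is a \emph{proper} ground of $W$ reflecting $\varphi(a)$, which is exactly the required instance of ground-model reflection. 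So the whole argument reduces to producing a proper ground of $W$ containing the given parameter $a$.

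This reduction isolates what I expect to be the main obstacle: the no-bedrock hypothesis is genuinely needed here, since if $a$ were adjoined by a forcing over a bedrock then $a$ could lie in no proper ground whatsoever. My plan to resolve it is to re-add the generic over a deeper ground. Writing $W=W_0[G]$ for a proper ground $W_0$ with $\P\in W_0$ and fixing a name $\dot a\in W_0$ with $\dot a_G=a$, I would use that $W_0$ again has no bedrock to descend to a still deeper proper ground $W_1\ofneq W_0$ capturing both $\P$ and $\dot a$; then $G$ remains $\P$-generic over $W_1$ and, by mutual genericity, the two forcings commute, so that $W=W_1[G][H]$ with $W_1[G]$ a proper ground of $W$ containing $a=\dot a_G$. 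The delicate step is arranging that $\P$ and $\dot a$ descend into a single proper ground $W_1$, which I would secure using Usuba's strong \DDG\ \cite{Usuba:The-downward-directed-grounds-hypothesis-and-very-large-cardinals} to guarantee a sufficiently rich, set-directed supply of deep grounds below $W_0$. Once such a proper ground containing $a$ is in hand, the previous paragraph closes the argument, and since $W$ also satisfies the ground-model maximality principle by theorem~\ref{Theorem.Correct-cardinal-implies-ground-with-ground-model-MP}, the same $W$ satisfies both principles for parameters of rank less than $\delta$, as desired.
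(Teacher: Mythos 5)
Your overall route is the paper's: you reuse the ground $W$ of theorem~\ref{Theorem.Correct-cardinal-implies-ground-with-ground-model-MP}, observe that $W$ inherits the absence of a bedrock from $V$ (your argument for this, via transitivity of the ground relation and the intermediate-model theorem, is correct and more detailed than the paper's one-line remark), and then apply the contrapositive form $\varphi(a)\Rightarrow\downnecessary\downpossible\varphi(a)$ of the ground-model maximality principle at a \emph{proper} ground of $W$ containing $a$ so as to land in a proper ground of $W$ satisfying $\varphi(a)$. Your reduction of the whole corollary to producing a single proper ground of $W$ containing the parameter $a$ is exactly right, and this is also where the paper's own proof is tersest: it disposes of the point with the phrase ``since there are such proper grounds of $W$.''

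The gap is in your proposed construction of that proper ground. Writing $W=W_0[G]$ with $\P,\dot a\in W_0$, you ask for a proper ground $W_1\ofneq W_0$ with $\P,\dot a\in W_1$---but this is an instance of the very problem you are trying to solve (find a proper ground of a no-bedrock model containing a prescribed set), now applied to $W_0$ and the set $(\P,\dot a)$, so the regress does not terminate. Moreover, Usuba's strong \DDG\ pushes in the wrong direction for this purpose: it supplies grounds lying \emph{below} any set-indexed family of grounds, and deeper grounds contain \emph{fewer} sets, whereas you need a proper ground that remains shallow enough to contain the given name. (Your mutual-genericity step is fine once $W_1$ is in hand: if $W_0=W_1[H]$ with $H$ generic for $\Q\in W_1$, then $H\times G$ is $W_1$-generic for $\Q\times\P$, so $W_1[G]$ is a proper ground of $W=W_1[G][H]$ containing $\dot a_G=a$.) A more promising source of grounds containing $a$ is the intermediate-model theorem itself: coding $a$ by a set of ordinals $A$, the model $U[A]$ is a ground of $W$ containing $a$ for \emph{any} proper ground $U$ of $W$, and the only remaining issue is to choose $U$ with $U[A]\neq W$. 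That residual issue is the substantive content hiding behind the paper's ``such proper grounds,'' and it is not resolved by your argument as written.
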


\begin{proof}
Assume $V_\delta\elesub V$ and $V$ has no bedrock. Let $W$ be the ground model identified in theorem \ref{Theorem.Correct-cardinal-implies-ground-with-ground-model-MP},
which satisfies the ground-model maximality principle for assertions with parameters of rank less than $\delta$ in $W$. Since there is no minimal ground, it follows that $W$ also
has no minimal ground. By the ground-model maximality principle, every statement $\varphi(a)$ true in $W$ with $a\in V_\delta\intersect W$ is true densely often in the grounds
of $W$, that is, true in some deeper ground of any given ground of $W$. Since there are such proper grounds of $W$, it follows that any such statement $\varphi(a)$ is true in
some proper ground of $W$, and so $W$ satisfies the ground-model reflection principle for these assertions.
\end{proof}

\section{Forcing axioms} \label{sec:forcingaxioms}

Let us consider next the question of whether strong forcing axioms might settle the inner-model or ground-model reflection principles.  Work of Caicedo-\Velickovic\ 
\cite[corollary 2]{CaicedoVelickovic2006:The-bounded-proper-forcing-axiom-and-well-orderings-of-the-reals} shows that it is relatively consistent with the bounded proper forcing 
axiom \BPFA\ that the universe is a minimal model of \BPFA, that is, that the universe satisfies \BPFA, but has no proper inner model satisfying \BPFA. In this situation, of course,
the inner-model reflection principle must fail, and so it seems natural to inquire whether \PFA\ or \MM\ might outright refute the ground-model reflection principle. The next theorem
shows, however, that this is not the case.

\begin{theorem}
 The proper forcing axiom \PFA, as well as Martin's Maximum \MM, if consistent, are consistent with the ground-model reflection principle, as well as with its failure.
\end{theorem}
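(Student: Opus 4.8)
The plan is to prove both halves by the same mechanism: \emph{preserving} the forcing axiom while forcing (or refuting) the ground-model reflection principle by means of a highly closed class forcing. The external ingredient is that both \PFA\ and \MM\ are preserved by ${<}\omega_2$-closed forcing---indeed ${<}\omega_2$-directed-closed forcing suffices---by the preservation theorems of König and Yoshinobu. The internal ingredient, for the positive direction, is the product forcing of theorem \ref{Theorem.Forcing-GMR}; the point is that it can be run using only high coordinates, whereupon it becomes ${<}\omega_2$-directed-closed and therefore preserves the forcing axiom, while the splitting argument still delivers ground-model reflection.

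For consistency with the principle, fix a model $V\models\PFA$ (respectively $\MM$), using that \PFA\ (resp.\ \MM) is consistent. Let $\P$ be the Easton-support product of the posets $\Add(\delta,1)$ taken over a proper class of regular cardinals $\delta>\omega_1$, exactly as in theorem \ref{Theorem.Forcing-GMR} but with every coordinate strictly above $\omega_1$. Each factor $\Add(\delta,1)$ is ${<}\delta$-directed-closed, hence ${<}\omega_2$-directed-closed, and the Easton supports are preserved under $\omega_1$-sized unions, so $\P$ is itself ${<}\omega_2$-directed-closed. Thus if $G\of\P$ is $V$-generic then $V[G]$ still satisfies the forcing axiom. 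Meanwhile the splitting argument of theorem \ref{Theorem.Forcing-GMR} goes through verbatim: given $\varphi(a)$ true in $V[G]$, with a condition $p$ forcing $\varphi(\dot a)$, choose a coordinate $\delta$ above the supports of $p$ and of $\dot a$, retain every other digit of the generic subset added at $\delta$ to form a generic $G_0$ with $p\in G_0$ and $\dot a_{G_0}=a$, and note that $V[G_0]\ofneq V[G]$ is a proper ground with $V[G_0]\models\varphi(a)$. Hence $V[G]$ satisfies the forcing axiom together with the ground-model reflection principle for arbitrary parameters.

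For consistency with the failure of the principle, recall from the introduction that the ground axiom refutes ground-model reflection, as it leaves no non-trivial ground in which even ``$a=a$'' could reflect; so it suffices to force the ground axiom over a model of the forcing axiom by a ${<}\omega_2$-closed class forcing. Starting again from $V\models\PFA$ (resp.\ $\MM$), force \GCH\ above $\omega_2$ if necessary by ${<}\omega_2$-closed forcing, and then perform the Reitz coding of the universe into the \GCH-pattern \cite{Reitz2006:Dissertation,Reitz2007:TheGroundAxiom,Hamkins2005:TheGroundAxiom}, carried out only at cardinals above $\omega_2$ so as to remain ${<}\omega_2$-closed. This preserves the forcing axiom, and, exactly as in the coding argument of corollary \ref{Corollary.Proper-class-of-measurables-gets-IMR-plus-not-GMR}, every set of the extension is coded unboundedly often into a pattern that every ground must share above the size of its forcing; hence the extension has no non-trivial grounds, satisfies the ground axiom, and so fails to satisfy ground-model reflection while still modeling the forcing axiom.

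The main obstacle is that the cited preservation theorems are stated for set forcing, whereas the forcings above are proper-class forcings. I would resolve this by the usual factoring: to verify a single instance of \PFA\ in $V[G]$---a proper poset $Q$ together with $\omega_1$ many dense subsets of it---fix a regular $\delta$ exceeding $\omega_1$ and the hereditary size of $Q$, and factor $\P\iso\P_\delta\times\P^{\delta}$ into its set-sized initial segment and its ${<}\delta$-closed tail. The tail adds no new sets of hereditary size ${<}\delta$, so $Q$ and all the dense sets already lie in $V[G_\delta]$, and $Q$ is proper there because the tail adds no new countable sequences. As $\P_\delta$ is a ${<}\omega_2$-closed \emph{set} forcing, $V[G_\delta]\models\PFA$ by the set-forcing preservation theorem, so the desired filter meeting all the dense sets exists in $V[G_\delta]\of V[G]$. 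The identical reduction, with ``semiproper'' and stationary-set preservation in place of ``proper,'' handles \MM.
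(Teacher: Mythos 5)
Your proof is correct and follows essentially the same route as the paper: preservation of \PFA\ and \MM\ under ${<}\omega_2$-directed closed class forcing (the paper cites Larson rather than K\"onig--Yoshinobu), the high-coordinate Easton product with the digit-splitting argument for consistency with the principle, and the \GCH-pattern coding iteration yielding the ground axiom for its failure (where the paper simply cites Hamkins--Reitz--Woodin). Your final paragraph factoring the class forcing into a set-sized initial segment and a highly closed tail supplies a detail the paper leaves implicit.
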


\begin{proof}
We use the fact that both \PFA\ and \MM\ are necessarily indestructible by ${<}\omega_2$-directed closed forcing, see Larson 
\cite[theorem 4.3]{Larson2000:Separating-stationary-reflection-principles}. Let $\P$ be the Easton-support class product used in the proof of theorem \ref{Theorem.Forcing-GMR}, 
but with non-trivial forcing factors only at stages $\omega_2$ and above. This forcing is consequently $\omega_2$-directed closed, and therefore preserves \PFA\ and \MM, if these 
forcing axioms should hold in the ground. The proof of theorem \ref{Theorem.Forcing-GMR} then shows that the extension satisfies the ground-model reflection principle, as 
desired.

Meanwhile, if (after suitable preparatory forcing coding sets into the \GCH\ pattern) one should use an Easton-support iteration, rather than a product, then again the
forcing is $\omega_2$-directed closed, and the main result of Hamkins-Reitz-Woodin \cite{HamkinsReitzWoodin2008:TheGroundAxiomAndVequalsHOD} shows that the extension 
satisfies the ground axiom: it has no non-trivial ground models for set forcing. Thus, the ground-model reflection principle fails in this extension.
\end{proof}

\section{Expressibility of inner model reflection} \label{sec:expressibility}

Lastly, let us consider the question of whether the inner-model reflection principle might be expressible in the first-order language of set theory, or whether, as we expect, it is a
fundamentally second-order assertion. The ground-model reflection principle, as we have pointed out, is expressible as a schema in the first-order language of set theory. But the
same does not seem to be true for the inner-model reflection principle, in light of the quantification over inner models. How can we prove that indeed there is no first-order means
of expressing the principle?

As a step towards this, let us first show that the existence of an inner model satisfying a given sentence $\sigma$ is not necessarily first-order expressible.

Denote by ($m$) the following consistency assumption, expressed in the language of proper classes:
\begin{quote}
$V=L$ + ``there is a truth-predicate $\Tr^L$ for truth in $L$.'' 
\end{quote}

The $m$ stands for \emph{mild}. Note that $\Tr^L$ is a non-definable class, which satisfies the Tarskian recursion for the definition of satisfaction for first-order truth. The 
consistency strength of ($m$) is strictly smaller than that of Kelly-Morse set theory, which is itself strictly weaker than \ZFC\ plus the existence of an inaccessible cardinal, since \KM\ 
implies the existence of such a truth predicate. So ($m$) is indeed a mild consistency assumption. See Gitman-Hamkins-Holy-Schlicht-Williams 
\cite{GitmanHamkinsHolySchlichtWilliams:The-exact-strength-of-the-class-forcing-theorem} for further discussion of truth predicates and the strength of this hypothesis in the 
hierarchy between \GBC\ and \KM. 

\begin{theorem}\label{Theorem.Disagree-on-inner-model-possibility}
Assuming ($m$), it is consistent that there are models $\<M,\in,S_0>$ and $\<M,\in,S_1>$ of \GBC\ set theory with the same first-order part $M$ and a particular first-order
sentence $\sigma$, such that $S_1$ has a proper inner model of $M$ satisfying $\sigma$, but $S_0$ does not.
\end{theorem}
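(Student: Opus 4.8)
The plan is to realize the disagreement over a single first-order structure $M$ by taking the two second-order parts to be, respectively, the classes of $M$ definable from set parameters and the classes definable from set parameters together with one extra, non-definable proper inner model. First I would fix the sentence $\sigma$ to be ``$V\neq L$''. This choice is essentially forced: since $\ZF$ proves that the only transitive class model of $\ZF$ containing all ordinals and satisfying $V=L$ is $L$ itself, and $L^M$ lies in every $\GBC$-expansion of $M$, any candidate $\sigma$ must fail in $L^M$ and indeed in every definable proper inner model. Note also that $M$ itself cannot satisfy $V=L$, for then (by the same absoluteness argument, a strengthening of Observation~\ref{Observation.Sigma_1-inner-model}) it would have no proper inner models in any second-order part whatsoever; so I will arrange $M\satisfies V\neq L$, whence $L^M\ofneq M$ is a definable proper inner model witnessing $\neg\sigma$.

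I would use the consistency of $(m)$ to produce a countable model $\<N,\in,\Tr^N>\satisfies\GBC+V=L$ carrying its truth class. Over $N$ I would then force with a class-length iteration $\P$ designed so that $M=N[G]$ satisfies $\ZFC+V=\HOD$ and the ground axiom, using the generic-coding technology of Reitz and of Hamkins--Reitz--Woodin that was already invoked for Theorem~\ref{Theorem.Forcing-GMR}. Arranging $V=\HOD$ and the ground axiom is meant to collapse the supply of definable inner models: the mantle and $\HOD$ become all of $M$, there are no non-trivial set-forcing grounds, and the goal is that $L^M$ is left as essentially the only definable proper inner model of $M$, so that no definable proper inner model satisfies $\sigma$. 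Simultaneously, $G$ provides a reservoir of intermediate models $N[G\restrict A]$; choosing $A$ to be a non-definable but amenable subclass of the coordinates of $\P$ yields a proper inner model $W=N[G\restrict A]$ with $L^M\ofneq W\ofneq M$ and $W\satisfies V\neq L$, where $W$ is not definable from set parameters in $M$.

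I would then set $S_0$ to be the classes of $M$ definable from set parameters and $S_1$ the classes definable from set parameters together with the predicate $W$. Both are closed under the $\GBC$ comprehension and Boolean operations, both contain a definable global well-ordering (inherited from $V=\HOD$ in $M$), and $W$ is amenable because $A$ is, so $\<M,\in,S_0>$ and $\<M,\in,S_1>$ are both models of $\GBC$ with the same first-order part $M$. By construction $S_1$ contains the proper inner model $W\satisfies\sigma$, while the proper inner models lying in $S_0$ are exactly the definable ones, none of which satisfies $\sigma$. Here the truth class $\Tr^N$, transported to $M$, does real work: it lets us evaluate uniformly, over all formulas at once, the assertion ``no definable proper inner model satisfies $\sigma$,'' turning the relevant schema into a single checkable statement and thereby certifying that $\<M,\in,S_0>$ genuinely satisfies $\neg\exists W\ofneq V\,\sigma^W$.

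The hardest step, and the one I expect to absorb most of the work, is guaranteeing that $M$ has no definable proper inner model satisfying $V\neq L$, equivalently that $L^M$ is the unique definable proper inner model of $M$. All the canonical inner models---$L$, $\HOD$, the mantle, the set-forcing grounds---are definable and hence unavoidably present in $S_0$, so the coding iteration $\P$ must be engineered to drive every definable inner model either up to $M$ or down into $L^M$, while still leaving the non-definable intermediate model $W\satisfies V\neq L$ available for $S_1$. Verifying this rigidity of the definable geology of $M$, together with the genuine non-definability and amenability of the chosen $W$, is the crux; the remaining bookkeeping---$\GBC$ for both expansions, preservation of $V=\HOD$ and the ground axiom through the iteration, and the use of $\Tr^N$ to express the non-existence claim inside $S_0$---is comparatively routine.
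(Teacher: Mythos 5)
There is a genuine gap at the step you yourself identify as the crux, and it is not one that the surrounding machinery ($V=\HOD$, the ground axiom, coding into the \GCH\ pattern) can close. With $\sigma$ taken to be ``$V\neq L$'', you need every proper inner model of $M$ that is definable from set parameters to equal $L^M$, while simultaneously keeping available a non-definable proper inner model strictly between $L^M$ and $M$. These demands pull against each other: in any non-trivial (class-)forcing extension $M=N[G]$ of a model of $V=L$, each set-sized piece $G\restrict\delta$ of the generic yields a definable-from-parameters proper inner model $L[G\restrict\delta]$ with $L^M\ofneq L[G\restrict\delta]\ofneq M$, and more generally $L[x]$ for any set $x\notin L^M$ is definable from the parameter $x$ and satisfies $V\neq L$. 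The ground axiom and $V=\HOD$ say nothing about such models, since they need not be grounds. Conversely, extensions rigid enough to kill all intermediate definable models (e.g.\ minimal ones) also kill the non-definable intermediate model you need for $S_1$. The paper explicitly flags that producing an $M$ in which some sentence holds in a non-definable proper inner model but in no parameter-definable one is exactly what the authors do not know how to arrange; your argument quietly assumes a solution to that open problem.

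The actual proof sidesteps this entirely by choosing $\sigma$ much more cleverly: working in $M=L[G]$ for the Easton product adding Cohen subsets to all regular cardinals, $\sigma$ asserts ``the class of regular cardinals $\delta$ carrying an $L$-generic Cohen subset codes a truth predicate for $L$.'' With $S_1$ the classes definable in $\<L[G],\in,G,\Tr^L>$ (this is where hypothesis $(m)$ is used), one restricts $G$ to the coordinates coding $\Tr^L$ and obtains an inner model $L[G_T]\in S_1$ satisfying $\sigma$. With $S_0$ the classes definable in $\<L[G],\in,G>$ alone, no inner model in $S_0$ can satisfy $\sigma$, because any such model would make a truth predicate for $L$ definable in $\<L[G],\in,G>$, and via the definable forcing relation this contradicts Tarski's theorem. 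So the non-existence of an inner model of $\sigma$ in $S_0$ is certified by undefinability of truth, not by a structural classification of all definable inner models of $M$; your claim that the choice $\sigma=$``$V\neq L$'' is ``essentially forced'' is therefore mistaken, and the truth predicate is used to build the witness on the $S_1$ side, not (as in your sketch) to express the non-existence claim on the $S_0$ side.
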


\begin{proof}
In $L$, let $\P$ be the Easton-support product that adds a Cohen subset via $\Add(\delta,1)$ for every regular cardinal $\delta$. Suppose $G\of\P$ is $L$-generic, and consider
$L[G]$. Let $S_0$ consist of the classes that are definable in the structure $\<L[G],\in,G>$, and let $S_1$ be the classes definable in $\<L[G],\in,G,\Tr^L>$, where we also add the
truth predicate. (A similar argument is made in Hamkins-Reitz \cite{HamkinsReitz:V-need-not-be-a-forcing-extension-of-HOD-or-the-mantle}.) Note that in light of the definability of 
the forcing relation for this forcing, it follows that $S_1$ includes a truth predicate for the extension $L[G]$. 

Both models $\<L[G],\in,S_0>$ and $\<L[G],\in,S_1>$ satisfy \GBC. Inside the latter model,
let $T$ be a class of regular cardinals that codes the information of the truth predicate $\Tr^L$ in some canonical and sufficiently absolute manner, such as by including
$\aleph_{\alpha+1}^L$ in $T$ exactly when $\alpha$ codes a formula-parameter pair $\varphi[\vec a]$ that is declared true by $\Tr^L$. Let $G_T$ be the restriction of the generic
filter $G$ to include the Cohen sets only on the cardinals in $T$. It is not difficult to see that in $L[G_T]$, the cardinals that have $L$-generic Cohen subsets of a regular cardinal
$\delta$ are precisely the cardinals in $T$. Therefore, the model $L[G_T]$ satisfies the assertion that ``the class of regular cardinals $\delta$ for which there is an $L$-generic
Cohen subset of $\delta$ codes a truth-predicate for truth in $L$.'' So $S_1$ has an inner model, namely $L[G_T]$, that satisfies this statement. But $S_0$ can have no such
inner model, since there can be no truth predicate for $L$ definable in $\<L[G],\in,G>$, as in this case we could use the definable forcing relation and thereby define a truth
predicate for the full structure $\<L[G],\in,G>$ itself, contrary to Tarski's theorem.
\end{proof}

In order to show that the inner-model reflection principle is not first-order expressible, however, one would need much more than this. It would suffice to exhibit a positive instance
of the following:

\begin{question}\rm
Are there two models of \GBC\ with the same first-order part, such that one of them is a model of the inner-model reflection principle and the other is not?
\end{question}

In particular, for this to happen we would at the very least need to strengthen theorem \ref{Theorem.Disagree-on-inner-model-possibility} by producing a model
$\<M,\in,S>\satisfies\GBC$ and a sentence $\sigma$ that is true in $M$ and also true in some inner model $W\ofneq M$ in $S$, but which is not true in any proper inner model
of $M$ that is first-order definable in $M$ allowing set parameters. We are unsure how to arrange even this much.

\subsection*{Acknowledgements}

The authors would like to thank Philip Welch for his interest in this project, helpful comments and suggestions. Thanks are also due to the anonymous referee for their valuable 
feedback. Neil Barton is very grateful for the generous support of the FWF (Austrian Science Fund) through Project P 28420 ({\it The Hyperuniverse Programme}). Gunter Fuchs 
was supported in part by PSC-CUNY grant 60630-00 48. Ralf Schindler gratefully acknowledges support by the DFG grant SCHI 484/8-1, ``Die Geologie Innerer Modelle''.

\bibliographystyle{alpha}

\newcommand{\etalchar}[1]{$^{#1}$}

\end{document}